\documentclass{article}

\catcode`\@=11

\thicklines
\newskip\Einheit \Einheit=.6cm
\newcount\xcoord \newcount\ycoord
\newdimen\xdim \newdimen\ydim \newdimen\PfadD@cke \newdimen\Pfadd@cke
\PfadD@cke2pt \Pfadd@cke0.3pt

\def\PfadDicke#1{\PfadD@cke#1 \divide\PfadD@cke by2
\Pfadd@cke\PfadD@cke \multiply\PfadD@cke by2}
\long\def\LOOP#1\REPEAT{\def\BODY{#1}\ITERATE}
\def\ITERATE{\BODY \let\next\ITERATE \else\let\next\relax\fi \next}
\let\REPEAT=\fi
\def\Punkt{\hbox{\raise-2pt\hbox to0pt{\hss\scriptsize$\bullet$\hss}}}

\def\DuennPunkt(#1,#2){\unskip
  \raise#2 \Einheit\hbox to0pt{\hskip#1 \Einheit
          \raise-1.5pt\hbox to0pt{\hss\tiny$\bullet$\hss}\hss}}

\def\NormalPunkt(#1,#2){\unskip
  \raise#2 \Einheit\hbox to0pt{\hskip#1 \Einheit
          \raise-3pt\hbox to0pt{\hss\large$\bullet$\hss}\hss}}
\def\DickPunkt(#1,#2){\unskip
  \raise#2 \Einheit\hbox to0pt{\hskip#1 \Einheit
          \raise-4pt\hbox to0pt{\hss\Large$\bullet$\hss}\hss}}
\def\Kreis(#1,#2){\unskip
  \raise#2 \Einheit\hbox to0pt{\hskip#1 \Einheit
          \raise-4pt\hbox to0pt{\hss\Large$\circ$\hss}\hss}}
\def\Diagonale(#1,#2)#3{\unskip\leavevmode
  \xcoord#1\relax \ycoord#2\relax
      \raise\ycoord \Einheit\hbox to0pt{\hskip\xcoord \Einheit
         \unitlength\Einheit
         \line(1,1){#3}\hss}}
\def\AntiDiagonale(#1,#2)#3{\unskip\leavevmode
  \xcoord#1\relax \ycoord#2\relax \advance\xcoord by -0.05\relax
      \raise\ycoord \Einheit\hbox to0pt{\hskip\xcoord \Einheit
         \unitlength\Einheit
         \line(1,-1){#3}\hss}}
\def\Pfad(#1,#2),#3\endPfad{\unskip\leavevmode
  \xcoord#1 \ycoord#2 \thicklines\ZeichnePfad#3\endPfad\thinlines}
\def\ZeichnePfad#1{\ifx#1\endPfad\let\next\relax
  \else\let\next\ZeichnePfad
    \ifnum#1=1
      \raise\ycoord \Einheit\hbox to0pt{\hskip\xcoord \Einheit
         \vrule height\Pfadd@cke width1 \Einheit depth\Pfadd@cke\hss}%
      \advance\xcoord by 1
     \else\ifnum#1=2
      \raise\ycoord \Einheit\hbox to0pt{\hskip\xcoord \Einheit
         \unitlength\Einheit
         \line(0,1){1}\hss}
      \advance\xcoord by 0
      \advance\ycoord by 1
 \else\ifnum#1=3
      \raise\ycoord \Einheit\hbox to0pt{\hskip\xcoord \Einheit
         \unitlength\Einheit
         \line(1,1){1}\hss}
      \advance\xcoord by 1
      \advance\ycoord by 1
    \else\ifnum#1=4
      \raise\ycoord \Einheit\hbox to0pt{\hskip\xcoord \Einheit
         \unitlength\Einheit
         \line(1,-1){1}\hss}
      \advance\xcoord by 1
      \advance\ycoord by -1
   \else\ifnum#1=5
      \raise\ycoord \Einheit\hbox to0pt{\hskip\xcoord \Einheit
         \unitlength\Einheit
         \line(2,1){2}\hss}
      \advance\xcoord by 2
      \advance\ycoord by 1
	  \else\ifnum#1=6
      \raise\ycoord \Einheit\hbox to0pt{\hskip\xcoord \Einheit
         \unitlength\Einheit
         \line(2,-1){2}\hss}
      \advance\xcoord by 2
      \advance\ycoord by -1
	  \else\ifnum#1=7
      \raise\ycoord \Einheit\hbox to0pt{\hskip\xcoord \Einheit
         \unitlength\Einheit
         \line(3,1){3}\hss}
      \advance\xcoord by 3
      \advance\ycoord by 1
	  \else\ifnum#1=8
      \raise\ycoord \Einheit\hbox to0pt{\hskip\xcoord \Einheit
         \unitlength\Einheit
         \line(3,-1){3}\hss}
      \advance\xcoord by 3
      \advance\ycoord by -1
    \fi\fi\fi\fi\fi\fi\fi\fi
  \fi\next}
\def\hSSchritt{\leavevmode\raise-.4pt\hbox
to0pt{\hss.\hss}\hskip.2\Einheit
  \raise-.4pt\hbox to0pt{\hss.\hss}\hskip.2\Einheit
  \raise-.4pt\hbox to0pt{\hss.\hss}\hskip.2\Einheit
  \raise-.4pt\hbox to0pt{\hss.\hss}\hskip.2\Einheit
  \raise-.4pt\hbox to0pt{\hss.\hss}\hskip.2\Einheit}
\def\vSSchritt{\vbox{\baselineskip.2\Einheit\lineskiplimit0pt
\hbox{.}\hbox{.}\hbox{.}\hbox{.}\hbox{.}}}
\def\DSSchritt{\leavevmode\raise-.4pt\hbox to0pt{%
  \hbox to0pt{\hss.\hss}\hskip.2\Einheit
  \raise.2\Einheit\hbox to0pt{\hss.\hss}\hskip.2\Einheit
  \raise.4\Einheit\hbox to0pt{\hss.\hss}\hskip.2\Einheit
  \raise.6\Einheit\hbox to0pt{\hss.\hss}\hskip.2\Einheit
  \raise.8\Einheit\hbox to0pt{\hss.\hss}\hss}}
\def\dSSchritt{\leavevmode\raise-.4pt\hbox to0pt{%
  \hbox to0pt{\hss.\hss}\hskip.2\Einheit
  \raise-.2\Einheit\hbox to0pt{\hss.\hss}\hskip.2\Einheit
  \raise-.4\Einheit\hbox to0pt{\hss.\hss}\hskip.2\Einheit
  \raise-.6\Einheit\hbox to0pt{\hss.\hss}\hskip.2\Einheit
  \raise-.8\Einheit\hbox to0pt{\hss.\hss}\hss}}
\def\SPfad(#1,#2),#3\endSPfad{\unskip\leavevmode
  \xcoord#1 \ycoord#2 \ZeichneSPfad#3\endSPfad}
\def\ZeichneSPfad#1{\ifx#1\endSPfad\let\next\relax
  \else\let\next\ZeichneSPfad
    \ifnum#1=1
      \raise\ycoord \Einheit\hbox to0pt{\hskip\xcoord \Einheit
         \hSSchritt\hss}%
      \advance\xcoord by 1
    \else\ifnum#1=2
      \raise\ycoord \Einheit\hbox to0pt{\hskip\xcoord \Einheit
        \hbox{\hskip-2pt \vSSchritt}\hss}%
      \advance\ycoord by 1
    \else\ifnum#1=3
      \raise\ycoord \Einheit\hbox to0pt{\hskip\xcoord \Einheit
         \DSSchritt\hss}
      \advance\xcoord by 1
      \advance\ycoord by 1
    \else\ifnum#1=4
      \raise\ycoord \Einheit\hbox to0pt{\hskip\xcoord \Einheit
         \dSSchritt\hss}
      \advance\xcoord by 1
      \advance\ycoord by -1
    \fi\fi\fi\fi
  \fi\next}
\def\Koordinatenachsen(#1,#2){\unskip
 \hbox to0pt{\hskip-.5pt\vrule height#2 \Einheit width.5pt depth1
\Einheit}%
 \hbox to0pt{\hskip-1 \Einheit \xcoord#1 \advance\xcoord by1
    \vrule height0.25pt width\xcoord \Einheit depth0.25pt\hss}}
\def\Koordinatenachsen(#1,#2)(#3,#4){\unskip
 \hbox to0pt{\hskip-.5pt \ycoord-#4 \advance\ycoord by1
    \vrule height#2 \Einheit width.5pt depth\ycoord \Einheit}%
 \hbox to0pt{\hskip-1 \Einheit \hskip#3\Einheit
    \xcoord#1 \advance\xcoord by1 \advance\xcoord by-#3
    \vrule height0.25pt width\xcoord \Einheit depth0.25pt\hss}}
\def\Gitter(#1,#2){\unskip \xcoord0 \ycoord0 \leavevmode
  \LOOP\ifnum\ycoord<#2
    \loop\ifnum\xcoord<#1
      \raise\ycoord \Einheit\hbox to0pt{\hskip\xcoord
\Einheit\Punkt\hss}%
      \advance\xcoord by1
    \repeat
    \xcoord0
    \advance\ycoord by1
  \REPEAT}
\def\Gitter(#1,#2)(#3,#4){\unskip \xcoord#3 \ycoord#4 \leavevmode
  \LOOP\ifnum\ycoord<#2
    \loop\ifnum\xcoord<#1
      \raise\ycoord \Einheit\hbox to0pt{\hskip\xcoord
\Einheit\Punkt\hss}%
      \advance\xcoord by1
    \repeat
    \xcoord#3
    \advance\ycoord by1
  \REPEAT}
\def\Label#1#2(#3,#4){\unskip \xdim#3 \Einheit \ydim#4 \Einheit
  \def\lo{\advance\xdim by-.5 \Einheit \advance\ydim by.5 \Einheit}%
  \def\llo{\advance\xdim by-.25cm \advance\ydim by.5 \Einheit}%
  \def\loo{\advance\xdim by-.5 \Einheit \advance\ydim by.25cm}%
  \def\o{\advance\ydim by.25cm}%
  \def\ro{\advance\xdim by.5 \Einheit \advance\ydim by.5 \Einheit}%
  \def\rro{\advance\xdim by.25cm \advance\ydim by.5 \Einheit}%
  \def\roo{\advance\xdim by.5 \Einheit \advance\ydim by.25cm}%
  \def\l{\advance\xdim by-.30cm}%
  \def\r{\advance\xdim by.30cm}%
  \def\lu{\advance\xdim by-.5 \Einheit \advance\ydim by-.6 \Einheit}%
  \def\llu{\advance\xdim by-.25cm \advance\ydim by-.6 \Einheit}%
  \def\luu{\advance\xdim by-.5 \Einheit \advance\ydim by-.30cm}%
  \def\u{\advance\ydim by-.30cm}%
  \def\ru{\advance\xdim by.5 \Einheit \advance\ydim by-.6 \Einheit}%
  \def\rru{\advance\xdim by.25cm \advance\ydim by-.6 \Einheit}%
  \def\ruu{\advance\xdim by.5 \Einheit \advance\ydim by-.30cm}%
  #1\raise\ydim\hbox to0pt{\hskip\xdim
     \vbox to0pt{\vss\hbox to0pt{\hss$#2$\hss}\vss}\hss}%
}
\catcode`\@=12

\usepackage{amssymb}
\usepackage{latexsym}
\usepackage{amsmath}
\usepackage{amsthm,pstricks}
\usepackage{xspace}
\topmargin 0 pt \textheight 46\baselineskip \advance\textheight by
\topskip \setlength{\parindent}{0pt} \setlength{\parskip}{5pt plus
2pt minus 1pt} \setlength{\textwidth}{155mm}
\setlength{\oddsidemargin}{5.6mm}
\setlength{\evensidemargin}{5.6mm}

\theoremstyle{remark}
\theoremstyle{plain}
\newtheorem{theorem}{Theorem}[section]
\newtheorem{lemma}[theorem]{Lemma}

\newtheorem{proposition}[theorem]{Proposition}

\newcommand{\asc}{\text{asc}}
\newcommand{\ase}{\text{ase}}
\def\gl{ground level\xspace}

\textwidth 146 mm
\textheight 230 mm
\oddsidemargin 7mm \evensidemargin -1mm \topmargin -4mm

\begin{document}

\title{Restricted ascent sequences and Catalan numbers}
\author{David Callan\\
\small Department of Statistics, University of Wisconsin, Madison, WI 53706\\[-0.8ex]
\small\texttt{callan@stat.wisc.edu}\\[1.8ex]
Toufik Mansour\\
\small Department of Mathematics, University of Haifa, 31905 Haifa, Israel\\[-0.8ex]
\small\texttt{tmansour@univ.haifa.ac.il}\\[1.8ex]
Mark Shattuck\\
\small Department of Mathematics, University of Tennessee, Knoxville, TN 37996\\[-0.8ex]
\small\texttt{shattuck@math.utk.edu}\\[1.8ex]
}

\date{\small }
\maketitle

\begin{abstract}
Ascent sequences are those consisting of non-negative integers in which the size of each letter is restricted by the number of ascents preceding it and have been shown to be equinumerous with the (2+2)-free posets of the same size.  Furthermore, connections to a variety of other combinatorial structures, including set partitions, permutations, and certain integer matrices, have been made.  In this paper, we identify all members of the (4,4)-Wilf equivalence class for ascent sequences corresponding to the Catalan number $C_n=\frac{1}{n+1}\binom{2n}{n}$. This extends recent work concerning avoidance of a single pattern and provides apparently new combinatorial interpretations for $C_n$.  In several cases, the subset of the class consisting of those members having exactly $m$ ascents is given by the Narayana number $N_{n,m+1}=\frac{1}{n}\binom{n}{m+1}\binom{n}{m}$.
\end{abstract}

\noindent{\em Keywords:} ascent sequence, Catalan number, Narayana number, kernel method

\noindent 2010 {\em Mathematics Subject Classification:} 05A15, 05A05

\section{Introduction}

An \emph{ascent} in a sequence $x_1x_2\cdots x_k$ is a place $j \geq 1$ such that $x_j<x_{j+1}$.  An \emph{ascent sequence} $x_1x_2\cdots x_n$ is one consisting of non-negative integers in which $x_1=0$ and satisfying the condition,
$$x_i\leq\asc(x_1x_2\cdots x_{i-1})+1, \qquad 1 < i \leq n,$$
where $\asc(x_1x_2\cdots x_k)$ is the number of ascents in the sequence $x_1x_2\cdots x_k$.  An example of such a sequence is $0101303543$, whereas $00110242$ is not since $4$ exceeds $\asc(001102)+1=3$.  Ascent sequences were first studied by Bousquet-M{\'e}lou, Claesson, Dukes, and Kitaev \cite{BCD}, where they were shown to have the same cardinality as the (2+2)-free posets of the same size.  Since then they have been studied in a series of papers by various authors and connections have been made to many other combinatorial structures.  See, for example, \cite{DP,DRS,KR} as well as \cite[Section 3.2.2]{K} for further information.

We will refer to a sequence of non-negative integers, where repetitions are allowed, as a \emph{pattern}.  Let $\pi=\pi_1\pi_2\cdots \pi_n$ be an ascent sequence and $\tau=\tau_1\tau_2\cdots\tau_m$ be a pattern.  Then we say that $\pi$ \emph{contains} $\tau$ if $\pi$ has a subsequence that is order isomorphic to $\tau$, that is, if there is a subsequence $\pi_{f(1)},\pi_{f(2)},\ldots,\pi_{f(m)}$, where $1\leq f(1)<f(2)<\cdots<f(m)\leq n$, such that for all $1 \leq i,j \leq m$, we have $\pi_{f(i)}<\pi_{f(j)}$ if and only if $\tau_i<\tau_j$ and $\pi_{f(i)}>\pi_{f(j)}$ if and only if $\tau_i>\tau_j$.  Otherwise, $\pi$ is said to \emph{avoid} $\tau$.  For example, the ascent sequence $01001341404654$ has three occurrences of the pattern $210$, namely, the subsequences $310$, $410$, and $654$, but avoids the pattern $201$.  Note that within an occurrence of a pattern $\tau$, letters corresponding to equal letters in $\tau$ must be equal within the occurrence.

To be consistent with the usual notation for ascent sequences which contains $0$s, we will write patterns for ascent sequences using non-negative integers in accordance with \cite{DS}, though patterns for other structures like permutations and set partitions are traditionally written with positive integers.  Given a set of patterns $T$, let $\mathcal{S}_n(T)$ denote the set of ascent sequences of length $n$ avoiding all of the patterns in $T$, and let $S_n(T)$ denote the number of such sequences.  Furthermore, if $0 \leq m <n$, then let $\mathcal{S}_{n,m}(T)$ denote the subset of $\mathcal{S}_n(T)$ whose members contain exactly $m$ ascents and let $S_{n,m}(T)=|\mathcal{S}_{n,m}(T)|$.

The Catalan numbers $C_n=\frac{1}{n+1}\binom{2n}{n}$ have been shown to count many structures in both enumerative and algebraic combinatorics.  Perhaps the most fundamental structure enumerated by $C_n$ is the set of lattice paths from $(0,0)$ to $(2n,0)$ using up $(1,1)$ steps and down $(1,-1)$ steps that never dip below the $x$-axis (called \emph{Catalan} or \emph{Dyck} paths).  In terms of avoidance, it was shown by Knuth \cite{Kn} that $C_n$ counts the permutations of $[n]=\{1,2,\ldots,n\}$ avoiding a single classical pattern $\tau$, where $\tau$ is any member of $S_3$ (see also \cite[Chapter 4]{K}).  Later, it was shown that $C_n$ is also the number of partitions of $[n]$ avoiding either $1212$ or $1221$ (called \emph{non-crossing} and \emph{non-nesting} partitions, respectively; see, e.g., \cite{Kl}).  More recently, in terms of ascent sequences, the numbers $C_n$ have been shown to count $\mathcal{S}_n(021)$, see \cite{DS}.  See \cite[A000108]{Sl} for more information!
  on these numbers.  To date, there are at least 204 structures known to be enumerated by the Catalan numbers; see Stanley's website \cite{St} for a complete list.

Recall that the Narayana numbers given by $N_{n,m}=\frac{1}{n}\binom{n}{m}\binom{n}{m-1}$, $1 \leq m \leq n$, refine the Catalan numbers in that $C_n=\sum_{m=1}^n N_{n,m}$ and have generating function
$$\sum_{1 \leq m \leq n}N_{n,m}x^ny^m=\frac{1-x(1+y)-\sqrt{(1+x(1-y))^2-4x}}{2x}.$$
Among other things, the Narayana numbers $N_{n,m}$ count the Dyck paths of semilength $n$ having $m$ peaks as well as the $132$-avoiding permutations of length $n$ having $m-1$ ascents (see, e.g., \cite[A001263]{Sl} and \cite{Br} for other combinatorial interpretations). More recently, the Narayana numbers have been shown to count certain classes of ascent sequences (see \cite{DS,MS}).

Our main result, Theorem \ref{t1} below, identifies all members of the $(4,4)$-Wilf equivalence class for ascent sequences corresponding to the Catalan number $C_n$ (excepting those pattern pairs that are trivially equivalent to either 0101 or 0012).  In addition to providing apparently new combinatorial interpretations for the Catalan (and Narayana) sequence, this extends recent work concerning the avoidance of a single pattern in ascent sequences (see \cite{CD,DS,MS,Y}).  We now state our main result.

\begin{theorem}\label{t1}
If $n \geq 1$, then $S_{n}(u,v)=C_n$ for the following pairs $(u,v)$:
\begin{center}
\begin{tabular}{llllll}
  $(1)~(0001,1012)$ & $(2)~(0010,0021)$ & $(3)~(0011,0021)$\\
  $(4)~(0021,0121)$ & $(5)~(0121,0132)$ & $(6)~(0121,1032)$\\
  $(7)~(0122,0132)$ & $(8)~(0122,1032).$ &    &\\
  \end{tabular}
\end{center}
In addition, we have $S_{n,m}(u,v)=N_{n,m+1}$ if $0 \leq m<n$ for the pairs $(4)$-$(8)$.
\end{theorem}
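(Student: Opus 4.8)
The plan is to separate the two assertions. Since $C_n=\sum_{m=1}^n N_{n,m}$, the refined identity $S_{n,m}(u,v)=N_{n,m+1}$ for pairs $(4)$--$(8)$ automatically yields $S_n(u,v)=\sum_{m=0}^{n-1}N_{n,m+1}=\sum_{k=1}^{n}N_{n,k}=C_n$ for those five pairs; so it suffices to prove the refined statement for $(4)$--$(8)$ and the bare Catalan count for $(1)$--$(3)$. Before computing anything I would try to cut down the number of genuinely distinct cases. The four pairs $(5)$--$(8)$ form a natural $2\times2$ array (first coordinate $0121$ or $0122$, second coordinate $0132$ or $1032$), and pair $(4)$ shares the pattern $0121$ with $(5)$; I expect that within the relevant avoidance classes one can exhibit length- and ascent-preserving bijections establishing $0121\sim 0122$ and $0132\sim 1032$, collapsing $(4)$--$(8)$ to one or two representatives. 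A similar search for equivalences among $(1)$, $(2)$, $(3)$ should leave only one or two representatives there as well.

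For each surviving representative $v$ I would set up a functional equation for the refined generating function. Let $F_v(x,y;q)=\sum x^{|\sigma|}y^{\asc(\sigma)}q^{\ell(\sigma)}$, summed over all ascent sequences $\sigma$ avoiding the pair, where $\ell(\sigma)$ is the value of the last letter (a catalytic variable; in some cases the current ascent count, equivalently the ceiling $\asc+1$, must be tracked instead of or alongside the last letter). A nonempty avoiding sequence is obtained from a shorter one by appending a letter $a$ with $0\le a\le \asc(\sigma)+1$; the ascent-sequence condition together with the two avoidance conditions translates into a description of exactly which values $a$ keep the word avoiding, recorded by $q$. Carrying this out produces a linear functional equation of the form $K(x,y,q)\,F_v(x,y;q)=P(x,y,q)+R(x,y)\,F_v(x,y;1)$, or a variant involving $F_v$ evaluated at a boundary value of $q$.

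I would then solve by the kernel method: factor the kernel $K$ and substitute the root $q=q_0(x,y)$ that makes it vanish, thereby eliminating the unknown boundary term and solving for the remaining specialization of $F_v$. Setting $q=1$ recovers $\sum_{n,m}S_{n,m}(u,v)\,x^ny^m$, which I would verify equals
\[
\frac{1-x(1+y)-\sqrt{(1+x(1-y))^2-4x}}{2xy},
\]
the series obtained from the Narayana generating function in the introduction after the index shift $m\mapsto m+1$; specializing $y=1$ then gives $\sum_{n\ge1}C_nx^n=\frac{1-2x-\sqrt{1-4x}}{2x}$, confirming $S_n(u,v)=C_n$. For the pairs $(1)$--$(3)$, whose ascent statistic need not be Narayana-distributed, I would run the same scheme with $y=1$ from the outset to obtain the univariate Catalan generating function directly, a separate treatment justified precisely by the absence of the Narayana refinement there.

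The main obstacle is the decomposition step. For length-$4$ patterns the condition that appending a given value $a$ preserves avoidance of \emph{both} members of the pair depends not merely on the last letter but on finer features of the prefix (for instance, which values have already occurred in the relevant relative positions), so choosing a catalytic variable rich enough to make the recurrence close while still producing a solvable (ideally quadratic) kernel is delicate, and some cases may force a two-variable catalytic setup with a correspondingly harder elimination. Establishing the bijective equivalences among the pairs rigorously, so that only one representative per group need be analyzed, is the other point demanding care.
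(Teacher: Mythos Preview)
Your proposal is a coherent plan rather than a proof: none of the functional equations are written down, no kernel is computed, and the bijective reductions among $(4)$--$(8)$ and among $(1)$--$(3)$ are only promised. That said, the strategy is plausible, so let me compare it with what the paper actually does.

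For cases $(4)$--$(8)$ the paper's argument is considerably simpler than the one you outline, because of a key observation you miss: the avoidance classes for $(4)$, $(5)$ and $(6)$ are not merely equinumerous with something nice, they are \emph{literally equal as sets} to $\mathcal{S}_n(021)$. Proposition~2.1 shows $\mathcal{S}_n(0021,0121)=\mathcal{S}_n(0121,0132)=\mathcal{S}_n(021)$ by elementary containment arguments (any occurrence of $021$ forces one of the two length-$4$ patterns), and Proposition~2.2 shows $\mathcal{S}_n(0121,0132)=\mathcal{S}_n(0121,1032)$ the same way. Only $(7)$ and $(8)$ require an actual (short, explicit) bijection, and that bijection lands back in case $(5)$. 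The Narayana count then comes for free from the known result $S_{n,m}(021)=N_{n,m+1}$ of Duncan--Steingr\'{\i}msson. No generating functions are needed for this block at all; the kernel-method computation in Section~4 of the paper treats a further refinement of the single-pattern $021$ class, not any of the pairs.

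For cases $(1)$--$(3)$ the paper again avoids functional equations entirely. Case~$(1)$ is dispatched by citing a correspondence with a set-partition avoidance problem already solved in the literature. Cases $(2)$ and $(3)$ are shown equivalent by a parallel recursive construction (Proposition~2.3), and then case~$(3)$ is put in explicit bijection with Dyck paths (Section~3). Your uniform kernel-method plan could in principle handle these too, but as you yourself note, the catalytic variable needed for a pair of length-$4$ patterns may be multidimensional, and you give no evidence that the resulting system closes tractably. The paper's bijective route sidesteps that difficulty.

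In short: your plan is not wrong, but it overlooks the reduction to $\mathcal{S}_n(021)$ that makes five of the eight cases nearly trivial, and it replaces the paper's concrete bijections in the remaining cases with a generating-function program whose feasibility you have not demonstrated.
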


Theorem \ref{t1} will follow from combining results in the next two sections.  For cases (4)-(8), we show that the ascent sequences avoiding the pair of patterns in question either are in one-to-one correspondence with or synonymous with ascent sequences avoiding the single pattern $021$. We then show that cases (2) and (3) are equivalent and in the third section identify a bijection between (3) and Dyck paths of twice the length.  In the final section, we further enumerate the members of $\mathcal{S}_n(021)$ according to the joint distribution of three statistics and make use of the \emph{kernel method} \cite{BBD} in our derivation.

\section{Pattern avoiding ascent sequences}

We first consider cases (4) and (5) in Theorem \ref{t1} above.

\begin{proposition}\label{p1}
If $n \geq1$, then $\mathcal{S}_{n}(0021,0121)=\mathcal{S}_{n}(0121,0132)=\mathcal{S}_{n}(021)$ and, for $0 \leq m <n$, we have $S_{n,m}(0021,0121)=S_{n,m}(0121,0132)=N_{n,m+1}$.
\end{proposition}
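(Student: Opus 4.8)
The plan is to establish the two set equalities $\mathcal{S}_n(0021,0121)=\mathcal{S}_n(021)$ and $\mathcal{S}_n(0121,0132)=\mathcal{S}_n(021)$ directly, and then read off the refined Narayana count. For each equality one inclusion is immediate: each of $0021$, $0121$, $0132$ contains $021$ as a sub-pattern (deleting the first letter of $0021$, or the second letter of $0121$, or the second letter of $0132$ leaves a copy of $021$), so every sequence avoiding $021$ automatically avoids all three. Hence $\mathcal{S}_n(021)\subseteq\mathcal{S}_n(0021,0121)$ and $\mathcal{S}_n(021)\subseteq\mathcal{S}_n(0121,0132)$.

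For the reverse inclusions I would argue by contraposition, showing that any ascent sequence $\pi=\pi_1\cdots\pi_n$ containing $021$ must contain $0021$ or $0121$ (giving the first equality) and must contain $0121$ or $0132$ (giving the second). So I would fix an occurrence of $021$ at positions $i<j<k$ with $\pi_i<\pi_k<\pi_j$, write $w=\pi_k$ and $v=\pi_j$ (so $0\le\pi_i<w<v$, whence $w\ge1$ and $v\ge2$), and split into cases according to whether the middle value $w$ already occurs to the left of the peak position $j$. If $w$ occurs at some position $b<j$, then since $\pi_1=0<w$ the positions $1<b<j<k$ carry the values $(0,w,v,w)$, which is order-isomorphic to $0121$; this single observation supplies the required $0121$ for \emph{both} equalities at once.

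The remaining, and crucial, case is that $w$ appears nowhere in $\pi_1\cdots\pi_{j-1}$. Here I would invoke the defining ascent condition. Let $p\le j$ be the first position at which $\pi$ attains a value $\ge w$; then every letter before $p$ lies in $\{0,1,\ldots,w-1\}$, and since $w$ is absent before $j$ we must have $\pi_p\ge w+1$. The inequality $\pi_p\le\asc(\pi_1\cdots\pi_{p-1})+1$ therefore forces $\asc(\pi_1\cdots\pi_{p-1})\ge w$, so among the $p-1$ letters before $p$ there are at least $w$ ascents. Two conclusions follow. First, at least $w$ ascents forces $p-1\ge w+1$, so by the pigeonhole principle two of these positions share a common value $u<w$; together with positions $j$ and $k$ they give the values $(u,u,v,w)$, a copy of $0021$. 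Second, any single one of those ascents provides positions $l<l+1<j$ with $\pi_l<\pi_{l+1}<w$, so $(\pi_l,\pi_{l+1},v,w)$ at positions $l<l+1<j<k$ is a copy of $0132$. Thus in this case $\pi$ contains both $0021$ and $0132$, which completes both contrapositives and hence both set equalities.

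Finally, because the three sets coincide, their refinements by the number of ascents coincide termwise, so $S_{n,m}(0021,0121)=S_{n,m}(021)=S_{n,m}(0121,0132)$; the value $N_{n,m+1}$ then follows from the known ascent-refined enumeration of $\mathcal{S}_n(021)$ (see \cite{DS,MS}). I expect the only genuine obstacle to be the case in which the middle value is absent before the peak, where the fourth letter must be extracted from the ascent bound itself rather than from the given occurrence; the pigeonhole/single-ascent dichotomy above is exactly what makes $0021$ and $0132$ surface simultaneously from that bound.
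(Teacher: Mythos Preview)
Your proof is correct. It is close to the paper's in spirit but is organized differently, and in one place it is more careful.

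For $\mathcal{S}_n(0021,0121)=\mathcal{S}_n(021)$, the paper also argues by contraposition. Its case split is on whether the ``$1$'' of the $021$-occurrence equals the actual value $1$ or is $>1$; in the latter case it argues, somewhat informally, that because the middle value is ``skipped'' among the left-to-right first occurrences of the distinct letters, some smaller positive value must repeat before the peak, yielding $0021$. Your case split (whether the middle value $w$ already occurs to the left of the peak) together with the explicit use of the bound $\pi_p\le\asc(\pi_1\cdots\pi_{p-1})+1$ and pigeonhole makes this repetition step fully rigorous and is arguably cleaner.

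For $\mathcal{S}_n(0121,0132)=\mathcal{S}_n(021)$, the paper's argument is shorter: it simply notes that the first nonzero letter of any ascent sequence is $1$, so a $021$-occurrence $0xy$ with $y\ge 2$ extends to $01xy$, a copy of $0132$, while $y=1$ immediately gives $0121$. Your route instead reuses the ascent-bound machinery to locate an ascent $\pi_l<\pi_{l+1}<w$ before the peak, which is more work but has the virtue of handling both set equalities with a single unified case analysis (your Case~1 supplies $0121$ for both, and your Case~2 simultaneously produces $0021$ and $0132$).

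In short: both arguments are valid; the paper is a bit more ad hoc and slicker for the second equality, while yours gives a uniform treatment that makes the dependence on the ascent-sequence inequality fully explicit.
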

\begin{proof}
We prove only the first set of equalities, the second following from the first by \cite[Theorem 2.15]{DS}.  Clearly, a member of $\mathcal{S}_{n}(021)$ avoids both $0021$ and $0121$.  So we will show that any $\pi \in \mathcal{S}_{n}(0021,0121)$ must avoid $021$
by showing that within $\pi$, no letter $i \geq 1$ can occur to the right of any letter $j>i$.  Suppose, to the contrary, that there is such an occurrence within $\pi$ involving the letters $a$ and $b$, where $b>a$.  Clearly, we must have $a>1$, for if $a=1$, then there would be an occurrence of $0121$ within $\pi$ corresponding to a subsequence of the form $01b1$.  Furthermore, no $a$'s can occur to the left of $b$, for otherwise there would be an occurrence of $0121$ of the form $0aba$.  We may also assume, without loss of generality, that $b$ is left-most of all letters of its kind.  Thus, some letter $c$, where $0<c<a$, must have been repeated prior to the left-most occurrence of $b$ in order to allow for the letter $a$ to be ``skipped'' when one considers a left-to-right scan of the left-most occurrences of the distinct letters of $\pi$. But then there would be an occurrence of $0021$ in $\pi$ corresponding to a subsequence of the form $ccba$.  Thus, there are no such l!
 etters $a$ and $b$ within $\pi$, which completes the proof.

To show $\mathcal{S}_{n}(0121,0132)=\mathcal{S}_{n}(021)$, note first that a member of $\mathcal{S}_{n}(021)$ avoids both $0121$ and $0132$.  To complete the proof, we'll show that if an ascent sequence $\pi=\pi_1\pi_2\cdots \pi_n$ contains $021$, then it must contain $0121$ or $0132$. Suppose that the letters $2$ and $1$ within an occurrence of $021$ in $\pi$ correspond to the actual letters $x$ and $y$, where $x > y \geq 1$; clearly one may take the $0$ within the occurrence to be $\pi_1=0$.  If $y=1$, then $\pi$ would contain an occurrence of $0121$ since there must be a $1$ to the left of $x$ as the first nonzero element in any ascent sequence is $1$.  If $y>1$, then $01xy$ is an occurrence of $0132$ in $\pi$, which completes the proof.
\end{proof}

The following result shows several of the equivalences in Theorem \ref{t1}.

\begin{proposition}\label{p2}
If $n \geq 1$ and $0 \leq m <n$, then
$$S_{n,m}(0121,0132)=S_{n,m}(0121,1032)=S_{n,m}(0122,0132)=S_{n,m}(0122,1032).$$
\end{proposition}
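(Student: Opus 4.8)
The plan is to prove the three equalities $S_{n,m}(0121,0132)=S_{n,m}(0121,1032)$, $S_{n,m}(0122,0132)=S_{n,m}(0122,1032)$, and $S_{n,m}(0121,0132)=S_{n,m}(0122,0132)$; the remaining equality then follows by transitivity. The first two instances swap $0132$ for $1032$ while fixing the companion pattern, and I would establish them as genuine identities of \emph{sets} (which automatically respects the ascent count). The third instance swaps $0121$ for $0122$; here the two sets are incomparable (for example $0121\in\mathcal{S}_4(0122,0132)\setminus\mathcal{S}_4(0121,0132)$, while $00122$ lies in the reverse difference), so this edge requires an explicit ascent-preserving bijection. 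Via Proposition \ref{p1}, which gives $\mathcal{S}_n(0121,0132)=\mathcal{S}_n(021)$, that bijection may be taken to have target $\mathcal{S}_n(021)$.

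For the $0132\leftrightarrow1032$ swap, the first inclusion is easy and uses the forced initial $0$. If $\pi_i\pi_j\pi_k\pi_l$ (with $i<j<k<l$) is order isomorphic to $1032$, then $\pi_j<\pi_i<\pi_l<\pi_k$, so $\pi_i\geq1>0=\pi_1$ forces $i\geq2$, and then $\pi_1\pi_i\pi_k\pi_l$, having values $0<\pi_i<\pi_l<\pi_k$, is order isomorphic to $0132$. Thus every ascent sequence containing $1032$ also contains $0132$, whence $\mathcal{S}_n(\rho,0132)\subseteq\mathcal{S}_n(\rho,1032)$ for $\rho\in\{0121,0122\}$, with equality of the refined statistics on the common members. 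The substance of the first two equalities is therefore the reverse inclusion: an ascent sequence that avoids $\rho$ yet contains $0132$ must in fact contain $1032$. I would prove this by the same sort of left-to-right scan of leftmost occurrences of distinct letters used in the proof of Proposition \ref{p1}: reaching the ``peak'' value $\pi_k$ of a $0132$-occurrence requires many preceding ascents, and I would argue that, under avoidance of $\rho$, climbing to that peak either reuses the ``medium'' value $\pi_l$ before the peak (producing a copy of $\rho$) or forces a descent among two values below $\pi_l$, which together with $\pi_k>\pi_l$ yields an occurrence of $1032$.

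For the $0121\leftrightarrow0122$ swap it remains to construct a bijection $\Phi\colon\mathcal{S}_n(0122,0132)\to\mathcal{S}_n(021)$ that preserves both the length and the number of ascents. Here I would exploit the clean structure of the target: as extracted from the proof of Proposition \ref{p1}, a member of $\mathcal{S}_n(021)$ is precisely an ascent sequence whose positive entries form a weakly increasing subsequence (the $0$'s being freely interspersed subject to the ascent condition). The task is thus to ``linearize'' a $0122$- and $0132$-avoiding sequence, whose repeated maximal values and single internal descents obstruct monotonicity, into such a sequence while tracking ascents exactly. I would attempt this by a positional recoloring of the repeated letters, reading $\pi$ from left to right and reassigning each repeated maximal value (the obstruction characteristic of a $0122$) according to the run structure of the already-processed prefix, then verify that the image avoids $021$, that the construction is reversible, and that each step neither creates nor destroys an ascent.

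The main obstacle is this last bijection: proving that $\Phi$ is well defined (its image genuinely avoids $021$), that it is invertible, and above all that it is ascent-preserving, since the reindexing of repeated letters can a priori shift ascent positions. A secondary, and I expect more routine, difficulty is the careful case analysis in the reverse inclusion of the $0132\leftrightarrow1032$ swap. Once both ingredients are in place, combining the set equalities $\mathcal{S}_n(0121,0132)=\mathcal{S}_n(0121,1032)$ and $\mathcal{S}_n(0122,0132)=\mathcal{S}_n(0122,1032)$ with the bijection $\Phi$ (through Proposition \ref{p1}) yields the full chain of equalities, and in particular, together with Proposition \ref{p1}, shows each of the four refined counts equals $N_{n,m+1}$.
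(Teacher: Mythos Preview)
Your decomposition has a genuine error in the second edge. The claimed set equality $\mathcal{S}_n(0122,0132)=\mathcal{S}_n(0122,1032)$ is false: take $\pi=01243$. This is a valid ascent sequence (the prefix $0124$ has three ascents, so the final $3$ is allowed), it avoids $0122$ since no positive value repeats, and it avoids $1032$ since there is only one descent. But $0,1,4,3$ is an occurrence of $0132$, so $\pi\in\mathcal{S}_5(0122,1032)\setminus\mathcal{S}_5(0122,0132)$. Your proposed argument---that avoidance of $\rho$ together with an occurrence of $0132$ forces an earlier descent below the ``medium'' value $\pi_l$---uses in an essential way that $\rho=0121$: one needs that the value $\pi_l$ cannot have appeared before the peak $\pi_k$, and it is precisely the pattern $0,\pi_l,\pi_k,\pi_l$ (an instance of $0121$) that rules this out. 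With $\rho=0122$ nothing prevents the climb $0,1,2,\ldots$ straight up to $\pi_k$ with no prior descent at all, as $01243$ shows.

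The paper avoids this trap by organizing the four-term chain differently. It proves the set equality only for the $0121$ companion, establishing $\mathcal{S}_n(0121,0132)=\mathcal{S}_n(0121,1032)$ exactly as you outline. For the $0122$ side it instead runs the \emph{bijection} twice: once from $\mathcal{S}_{n,m}(0121,0132)$ to $\mathcal{S}_{n,m}(0122,0132)$ and once, by the same construction, from $\mathcal{S}_{n,m}(0121,1032)$ to $\mathcal{S}_{n,m}(0122,1032)$. Concretely, a member of $\mathcal{S}_n(021)$ that is not binary has the form $\alpha\beta$ with $\alpha$ a $\{0,1\}$-word and $\beta$ a word on $\{0,2,3,\ldots\}$ whose only descents are to $0$; the map replaces every non-first occurrence of each letter $r>1$ in $\beta$ by $1$. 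Your vague ``positional recoloring'' is heading toward the inverse of this map, so your third edge can be salvaged, but your second edge cannot be repaired as a set identity and must be replaced by a bijection of this type.
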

\begin{proof}
We first define a bijection between $\mathcal{S}_{n,m}(0121,0132)$ and $\mathcal{S}_{n,m}(0122,0132)$.  To do so, note that $\rho \in \mathcal{S}_{n,m}(0121,0132)$ implies either $\rho$ is binary or of the form $\rho=\alpha\beta$, where $\alpha$ is a binary word starting with $0$ but not consisting of all $0$s and $\beta$ is a nonempty word not containing $1$ and containing no descent in which the smaller letter is greater than $1$.  Define the ascent sequence $\rho'$ by either letting $\rho'=\rho$ if $\rho$ is binary or otherwise letting $\rho'=\alpha\beta'$, where $\beta'$ is obtained from $\beta$ by replacing all but the first occurrence of each letter $r>1$ with $1$. For example, if $n=12$, $m=5$, and $\rho=001122020404\in\mathcal{S}_{12,5}(0121,0132)$, then $\rho'=001121010401\in \mathcal{S}_{12,5}(0122,0132)$.  One may verify that the mapping $\rho\mapsto \rho'$ is a bijection from $\mathcal{S}_{n,m}(0121,0132)$ to $\mathcal{S}_{n,m}(0122,0132)$.  A similar bijection s!
 hows $S_{n,m}(0121,1032)=S_{n,m}(0122,1032)$.

To complete the proof, we show $S_{n,m}(0121,0132)=S_{n,m}(0121,1032)$, and for this, we show in fact $\mathcal{S}_{n}(0121,0132)=\mathcal{S}_{n}(0121,1032)=\mathcal{S}_{n}(0121,0132,1032)$. To do so,  suppose $\lambda \in \mathcal{S}_{n}(0121)$.  If $\lambda$ contains an occurrence of $1032$ corresponding to a subsequence $abcd$, then it would also contain an occurrence of $0132$ corresponding to the subsequence $0acd$.  Suppose then that $\lambda$ contains an occurrence $\tau$ of $0132$ corresponding to a subsequence $abcd$, where we may assume $a=0$, $b=1$, and that the occurrence of the letter $c$ is left-most.  Since $\lambda$ avoids $0121$, no letter $d$ can occur to the left of the first occurrence of $c$.  Thus, there must be at least one descent to the left of the left-most occurrence of any letter $i>d$, in particular, for $i=c$.  Then $\lambda$ would contain an occurrence of $1032$ corresponding to the subsequence $xycd$, where $x$ and $y$ denote the letters belon!
 ging to the aforementioned descent.
\end{proof}

Our next result shows the equivalence of the cases of avoiding $\{0010,0021\}$ and $\{0011,0021\}$.

\begin{proposition}\label{p4}
If $n \geq 1$ and $0 \leq m < n$, then $S_{n,m}(0010,0021)=S_{n,m}(0011,0021)$.
\end{proposition}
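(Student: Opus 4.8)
The plan is to construct a bijection $\Phi\colon\mathcal{S}_{n,m}(0010,0021)\to\mathcal{S}_{n,m}(0011,0021)$ directly. Since both classes avoid $0021$ and both contain the common subclass $\mathcal{S}_{n,m}(0010,0011,0021)$ of sequences avoiding all three patterns, it suffices to let $\Phi$ fix this subclass and biject the two remaining pieces: the set $\mathcal{A}_{n,m}$ of sequences avoiding $0010,0021$ but \emph{containing} $0011$, and the set $\mathcal{B}_{n,m}$ of sequences avoiding $0011,0021$ but \emph{containing} $0010$. Then $S_{n,m}(0010,0021)=S_{n,m}(0011,0021)$ reduces to $|\mathcal{A}_{n,m}|=|\mathcal{B}_{n,m}|$.

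First I would pin down the local structure shared by the two classes. In any $0021$-avoider, once a value $a$ has occurred twice, the letters exceeding $a$ that appear afterward form a weakly increasing subsequence, since a strict descent $c,b$ with $a<b<c$ among them would complete an occurrence $a,a,c,b$ of $0021$. Against this common backdrop the two extra patterns act in complementary ways: avoiding $0010$ forbids any \emph{return} to $a$ after a larger letter $c$ has appeared (that would be $a,a,c,a$) while permitting a larger letter to \emph{repeat}, whereas avoiding $0011$ forbids the repeat $a,a,c,c$ while permitting a return to $a$. This complementarity is exactly what $\Phi$ should exploit: it turns ``repeats of a larger letter'' into ``returns to a smaller letter,'' and conversely, never touching the first occurrence of any value.

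Concretely, for $\pi\in\mathcal{A}_{n,m}$ I would keep every letter that is the top of an ascent (the first appearance of its value after the relevant doubling) and \emph{demote} each plateau-repeat of such a top $c$ to a suitable smaller value $a'$ that has already been doubled; in all small cases the demoted letter equals the most recent preceding ascent top, so the inverse map simply \emph{promotes} each return back to that ascent top, which is still present. The key point making $\Phi$ ascent-preserving is that every demoted position lies on a plateau or just before a drop: its predecessor is $\ge c>a'$ and its successor is either $>c$ or below the low value, so replacing $c$ by $a'$ flips neither of the two incident order relations. Because first occurrences (hence the maximum letter and the ascent tops) are untouched, the number of ascents is preserved; one then checks that the image avoids $0011,0021$, acquires an occurrence of $0010$, and that the promotion map is a genuine two-sided inverse.

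The hard part will be making $\Phi$ well-defined and invertible in the presence of nested or overlapping configurations, i.e.\ when a letter $c$ being demoted is itself the smaller value of a higher configuration, as in $0,0,1,1,2,2$. There a naive choice of $a'$ (for instance demoting to the immediately preceding doubled value rather than to the least one) can create a fresh occurrence of $0011$, so the return value must be chosen canonically and the configurations processed consistently (e.g.\ from the largest value downward, or recursively). The bulk of the argument is therefore the verification of three coupled claims: that no new forbidden pattern is introduced, that the preserved ascent tops carry exactly enough information to reverse the demotions, and that the number of ascents is unchanged. A viable alternative, should the bookkeeping become unwieldy, is to bypass the explicit bijection and instead derive a single recurrence—conditioning on the last letter together with the current maximum and the set of already-doubled values—and show that both $S_{n,m}(0010,0021)$ and $S_{n,m}(0011,0021)$ satisfy it with the same initial data.
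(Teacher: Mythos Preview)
Your proposal is an outline, not a proof, and the central object---the bijection $\Phi$---is never actually defined. You say ``demote each plateau-repeat of such a top $c$ to a suitable smaller value $a'$ that has already been doubled'' and then ``in all small cases the demoted letter equals the most recent preceding ascent top,'' but these two descriptions conflict (an ascent top is a \emph{larger} value, not a smaller doubled one), and neither prescription survives the first nontrivial example. For $\pi=001122\in\mathcal{A}_{6,2}$ the only way to land in $\mathcal{B}_{6,2}$ is to demote \emph{both} repeats to $0$, giving $001020$; demoting the second $2$ to the ``most recent preceding ascent top'' $1$ yields $001121$, which contains $0011$. For $\pi=00122$ you must demote to $0$ (getting $00120$); demoting to the previous ascent top $1$ gives $00121$, which contains $0021$. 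So a rule based on ``most recent ascent top'' fails immediately, and ``always demote to $0$'' needs its own argument. You correctly identify the nested case as the hard part, but you do not resolve it; as written, the proposal is a plausible heuristic plus a to-do list.

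The paper does not attempt a direct letter-rewriting bijection. Instead it does exactly what you list as your fallback: it shows that $\mathcal{U}_n=\mathcal{S}_n(0010,0021)$ and $\mathcal{V}_n=\mathcal{S}_n(0011,0021)$ are generated by structurally identical recursive procedures. One strips all $0$s from a sequence, decrements the rest to get a shorter avoider $\pi$, and records a set of ``active sites'' in the increasing tail of $\pi$ where letters were inserted. In $\mathcal{U}_n$ the leftmost chosen site receives $0$s and each subsequent site receives copies of the letter just before it; in $\mathcal{V}_n$ every chosen site receives $0$s. An additional correction term handles the case where the leftmost insertion creates an extra ascent. Because the two insertion rules are in one-to-one correspondence site-by-site, $|\mathcal{U}_{n,r}|=|\mathcal{V}_{n,r}|$ for all $n,r$ (where $r$ counts active sites), and this refines by number of ascents. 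If you want to salvage your approach, the paper's recursion is the version of your ``alternative'' that actually closes; if you insist on an explicit $\Phi$, you will need a precise, provably invertible rule for the demotion target, and the examples above show it cannot be local.
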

\begin{proof}
We will show that both sets can be generated inductively by equivalent recursive procedures.  We first consider the case of avoiding $0010$ and $0021$.  Let $\mathcal{U}_n=\mathcal{S}_n(0010,0021)$.  Let $\pi \in \mathcal{U}_\ell$ for some $\ell<n$, where we use the alphabet of positive rather than non-negative integers.  We wish to add letters to $\pi$, including at least one zero, so as to  create a member of $\mathcal{U}_n$ (with the typical representation).  If $\pi \neq 12\cdots \ell$, then we may write $\pi=\alpha\beta$, where $\alpha$ is possibly empty, $t \in \{2,3,\ldots,\ell\}$, and $\beta=\beta_1\beta_2\cdots \beta_t$ with $\beta_1\geq \beta_2$ and $\beta_2<\beta_3<\cdots<\beta_t$.  In this case, we will say that $\pi$ has exactly $t$ \emph{active sites} corresponding to the slots between the letters $\beta_i$ and $\beta_{i+1}$ for each $i \in [t-1]$ as well as the position directly after letter $\beta_t$, in which case we will write $\ase(\pi)=t$.  If $\pi=12\cdots\ell$, then we let $\ase(\pi)=\ell+1$, the active sites in this case corresponding to the positions directly following each letter in the word $01\cdots \ell$.  Let $\mathcal{U}_{\ell,t}$ denote the subset of $\mathcal{U}_{\ell}$ consisting of those members for which $\ase(\pi)=t$.

We now generate members of $\mathcal{U}_n$ from members $\pi \in \mathcal{U}_{\ell,t}$ for various $\ell<n$ and $t$ by writing a single $0$ before $\pi$ and then inserting letters in the active sites of $\pi$ as follows.
\begin{align*}
1.&\text{ Choose some subset, possibly empty, of the active sites of $\pi$ in which to write at least one}\\
&\text{ letter.}\\
2.&\text{ In the left-most active site selected, write a run of $0$s.}\\
3.&\text{ In each subsequently chosen active site, write a string of the letter directly preceding it.}
\end{align*}
In the case when $\pi\neq12\cdots\ell$ and the (left-most) active site selected in the first step above directly follows $\beta_1$, then an additional ascent is created when a run of zeros is written there since $\beta_1 \geq \beta_2\geq 1$.  Let $0\pi'$ denote the ascent sequence in this case that results when the three steps above are performed on $0\pi$.  Thus, one may also add in this case to the end of $0\pi'$ a sequence of the form $(m+1)^{i_1}(m+2)^{i_2}\cdots (m+d)^{i_d}$, where $d\geq1$, $i_j\geq 1$ for each $j \in [d]$, and the number of ascents in the original sequence $\pi$ is $m-2$.

Let $\mathcal{V}_n=\mathcal{S}_n(0011,0021)$ and $\mathcal{V}_{\ell,t}$ be defined exactly as $\mathcal{U}_{\ell,t}$ above except that $0011$ occurs in place of $0010$.  Then one may describe a similar recursive procedure for generating the members of $\mathcal{V}_n$ from the members of $\mathcal{V}_{\ell,t}$ for various $\ell<n$ and $t$, except that step 3 is replaced with

\noindent \text{~~3$'$. In each subsequently chosen active site, write a string of $0$s,}

and the sequence $(m+1)^{i_1}(m+2)^{i_2}\cdots (m+d)^{i_d}$ described in the preceding paragraph is replaced with $(m+1)0^{i_1-1}(m+2)0^{i_2-1}\cdots (m+d)0^{i_d-1}$.

Comparing the cardinalities of $\mathcal{U}_{n,r}$ and $\mathcal{V}_{n,r}$ for $1 \leq n \leq 3$ and $2 \leq r \leq n+1$ shows that they are the same.  Comparing the two procedures above, one sees that the number of members of $\mathcal{U}_{n,r}$ that arise from each member of $\mathcal{U}_{\ell,t}$ is the same as the number of members of $\mathcal{V}_{n,r}$ that arise from each member of $\mathcal{V}_{\ell,t}$ for various $r$, $t$, and $\ell$, where $n-\ell\geq1$ denotes the total number of letters added in either procedure (in the form of $0$s, repeated letters $\beta_j$, or letters $m+i$ at the end).  Thus, by induction, we have $|\mathcal{U}_{n,r}|=|\mathcal{V}_{n,r}|$ for all $n$ and $r$.  Note that the preceding shows further that the sets $\mathcal{U}_{n,r}$ and $\mathcal{V}_{n,r}$ have equivalent refinements according to the number of ascents for fixed $n$ and $r$, which implies the result.
\end{proof}

\textbf{Remark:}  By \cite[Lemma 2.4]{DS}, the case of avoiding $\{0001,1012\}$ corresponds to the avoidance problem on set patterns for the patterns $1112$ and $12123$, which was treated in \cite{MS0}. It is then seen that the number of members of $\mathcal{S}_n(0001,1012)$ containing $m$ distinct letters is given by $N_{n,m}$ for $1 \leq m \leq n$, by \cite[Theorem 1.1]{MS0}.

\textbf{Remark:} From \cite[Lemma 2.4]{DS}, there are several cases of $(4,4)$ for ascent sequences that are logically equivalent to avoiding either $1212$ or $1123$ by set partitions which we do not list here.

\section{A bijection for the case \{0011,0021\}}

A Dyck path is one consisting of up $(1,1)$ steps and down $(1,-1)$ steps which we will denote by $U$ and $D$, respectively, starting from the origin and ending on the $x$-axis. A \emph{return} in a Dyck path is a noninitial vertex on the $x$-axis.  An \emph{elevated}
Dyck path is a nonempty Dyck path whose only return is at the terminal vertex.
The returns of a nonempty Dyck path split it into elevated Dyck paths
called the \emph{components} of the path. Thus a nonempty Dyck path is
elevated when it has precisely one component.

In this section, we will define a bijection $\phi$ from (0011,\:0021)-avoiding ascent
sequences $u$ of size $n$ to Dyck paths of size $n$, where size is measured as
length for ascent sequences and number of upsteps for Dyck paths.
The bijection $\phi $ sends the number of 0s
in $u$ to the number of components in the corresponding Dyck path
and this will serve as an inductive hypothesis in the definition.
We then compare some statistics on (0011,\:0021)-avoiding ascent
sequences with statistics on Dyck paths.

We make now the following useful observation.
Suppose that $u$ avoids the patterns 0011 and 0021 and that a number
$b$ serves as the 1 of a 001 pattern in $u$.
It is plain that the first occurrence of $b$ in that role is also the last
(else a 0011 would be present),
and that the entries after this occurrence and $>b$ are (strictly) increasing
(else a 0021  would be present).
Thus, if 0 occurs more than once, the nonzero entries following the second 0
must be increasing. In particular, 1 occurs at most once after the second 0 and,
when it does occur, the entries between the second 0 and this 1 are all 0.

\subsection{The bijection $\phi$}

First, set $\phi\big((0)\big) = U\!D$.
Now split (0011,\:0021)-avoiding ascent sequences (avoiders, for short) of length $\ge 2$ into 4 classes:
(1) those consisting entirely of 0s,
(2) those with only one 0 (necessarily at the start),
(3) those with more than one 0 but not all 0s and with no 1 after the second 0, and
(4) those with more than one 0 and a single 1 after the second 0.
We deal with each case in turn.
Suppose we are given an avoider $u$ of length $\ge 2$.

Case 1. $u$ is a sequence of $n$ 0s. Set $\phi(u)= (U\!D)^n$, the ``sawtooth'' Dyck path.

Case 2. $u$ has only one 0. Delete the first 0, decrement by 1 all other entries to get a
one-size-smaller avoider $w$, then elevate $\phi(w)$, that is, concatenate $U, \, \phi(w)$, and $D$.

Case 3. $u$ has more than one 0 but is not all 0s and has no 1 after the second 0. Here $u$ must start 01 since, if $u$ starts 00 and is not all 0s, then a 1 must occur after the second 0.

Let $k \ge 2$ denote the total number of 0s in $u$.
Form a one-size-smaller avoider $w$ by deleting the first 0
and decrementing by 1 all other \emph{nonzero} entries. Note that
$w$ still has at least $k$ 0s
and (by the inductive hypothesis) $\phi(w)$ has at least $k$ components.
Let $P$ denote the result of erasing the last $k-1$ components from $\phi(w)$
and let $Q$ denote the subpath consisting of the erased components.
Thus neither $P$ nor $Q$ is the empty path. Set $\phi(u)= UPDQ$, schematically,

\Einheit=0.4cm
\[
\Pfad(-2,0),3\endPfad
\Pfad(0,1),4\endPfad
\DuennPunkt(-2,0)
\DuennPunkt(-1,1)
\DuennPunkt(0,1)
\DuennPunkt(1,0)
\Label\o{P}(-0.5,1)
\Label\o{Q}(2,-0.3)
\]
a Dyck path with $k$ components. \\

Case 4. $u$ has more than one 0 and a (single) 1 after the second 0. Let $j\ge0$
denote the length of the segment strictly between the second 0 and the single 1.
Delete this segment (it consists entirely of 0s) and the single 1.
Then decrement each \emph{nonzero} entry after the second 0 (in turn) by at least 1
(decrementing by 1 may result in an offending pattern) but by no more than necessary to
avoid an offending pattern. For example, the avoider
0123341120013057 contains 334 and 112 before the second 0, and so 4 and 2 are
certainly forbidden later in the sequence. Thus the segment 013057 after
the second 0 becomes 3057 after deleting and then 1036 after decrementing.

Let $w$ denote the avoider thus obtained. Note that $w$ has length $n-1-j$ and contains
at least two 0s. So, by the inductive hypothesis, $\phi(w)$ is a Dyck path with at
least 2 components and thus of the form $UPDUQDR$ or, schematically,

\Einheit=0.4cm
\[
\hspace*{10mm}
\Pfad(-4,0),3\endPfad
\Pfad(-2,1),43\endPfad
\Pfad(1,1),4\endPfad
\DuennPunkt(-4,0)
\DuennPunkt(-3,1)
\DuennPunkt(-2,1)
\DuennPunkt(-1,0)
\DuennPunkt(0,1)
\DuennPunkt(1,1)
\DuennPunkt(2,0)
\Label\o{P}(-2.5,1)
\Label\o{Q}(0.5,1)
\Label\o{R}(2.8,-0.3)
\]

with $P,Q,R$ all Dyck paths (possibly empty).
Set $\phi(u)=(U\!D)^{j+1}UUPDQDR$, schematically,
\Einheit=0.4cm
\[
\hspace*{10mm}
\Pfad(-8,0),34\endPfad
\Pfad(-4,0),3433\endPfad
\Pfad(1,2),4\endPfad
\Pfad(3,1),4\endPfad
\DuennPunkt(-8,0)
\DuennPunkt(-7,1)
\DuennPunkt(-6,0)
\DuennPunkt(-4,0)
\DuennPunkt(-3,1)
\DuennPunkt(-2,0)
\DuennPunkt(-1,1)
\DuennPunkt(0,2)
\DuennPunkt(1,2)
\DuennPunkt(2,1)
\DuennPunkt(3,1)
\DuennPunkt(4,0)
\Label\o{\dots}(-5,0)
\Label\o{P}(0.5,2)
\Label\o{Q}(2.5,1)
\Label\o{R}(4.8,-0.3)
\]

with $j+1$ $U\!D$s at the start.  \qed

Thus, for example, $\phi(\,(0,1)\,)=UUDD$ and the effect of $\phi$ on the 5 avoiders of length 3 is shown below (with case number in parentheses).

\Einheit=0.37cm
\[
\Pfad(-19,0),343434\endPfad
\Pfad(-11,0),343344\endPfad
\Pfad(-3,0),334434\endPfad
\Pfad(5,0),334344\endPfad
\Pfad(13,0),333444\endPfad
\gray{\SPfad(-19,0),111111\endSPfad
\SPfad(-11,0),111111\endSPfad
\SPfad(-3,0),111111\endSPfad
\SPfad(5,0),111111\endSPfad
\SPfad(13,0),111111\endSPfad}
\DuennPunkt(-19,0)
\DuennPunkt(-18,1)
\DuennPunkt(-17,0)
\DuennPunkt(-16,1)
\DuennPunkt(-15,0)
\DuennPunkt(-14,1)
\DuennPunkt(-13,0)
\DuennPunkt(-11,0)
\DuennPunkt(-10,1)
\DuennPunkt(-9,0)
\DuennPunkt(-8,1)
\DuennPunkt(-7,2)
\DuennPunkt(-6,1)
\DuennPunkt(-5,0)
\DuennPunkt(-3,0)
\DuennPunkt(-2,1)
\DuennPunkt(-1,2)
\DuennPunkt(0,1)
\DuennPunkt(1,0)
\DuennPunkt(2,1)
\DuennPunkt(3,0)
\DuennPunkt(5,0)
\DuennPunkt(6,1)
\DuennPunkt(7,2)
\DuennPunkt(8,1)
\DuennPunkt(9,2)
\DuennPunkt(10,1)
\DuennPunkt(11,0)
\DuennPunkt(13,0)
\DuennPunkt(14,1)
\DuennPunkt(15,2)
\DuennPunkt(16,3)
\DuennPunkt(17,2)
\DuennPunkt(18,1)
\DuennPunkt(19,0)
\Label\o{000}(-16,-1.8)
\Label\o{(1)}(-16,-3)
\Label\o{001}(-8,-1.8)
\Label\o{(4)}(-8,-3)
\Label\o{010}(0,-1.8)
\Label\o{(3)}(0,-3)
\Label\o{011}(8,-1.8)
\Label\o{(2)}(8,-3)
\Label\o{012}(16,-1.8)
\Label\o{(2)}(16,-3)
\]

The inductive step for the assertion ``$\phi$ sends \# 0s to \# components'' clearly holds for cases 1 and 2, and for case 3 by the remarks made in that case. As for case 4, suppose $u$ has a total of $k$ 0s. Then $w$ has precisely $k-j$ 0s. So $\phi(w)$ has $k-j$ components (by the inductive hypothesis) and $R$ has $k-j-2$ components. The final result consists of $j+1$ $U\!D$s followed by an elevated path and then by $R$ for a total of $(j+1)+1+(k-j-2) = k$ components, and the induction step is verified.

To reverse the map, the image paths in the four classes can be distinguished as follows:
(1) sawtooth paths, $(U\!D)^n$, (2) elevated Dyck paths, (3) paths that start $UU$ but are
not elevated, and (4) paths that start $U\!D$ but are not sawtooth. We leave the reader to take it from there.

\subsection{Some equidistributions} \label{some}

\vspace*{-3mm}

The effect of the bijection $\phi$ on some statistics is shown in Table 1 below.
The all-0s ascent sequence $0^n$ corresponds to the sawtooth path $(U\!D)^n$.
To avoid exceptional cases in the definition of some statistics, the all 0s sequence
and the sawtooth path are excluded in Table 1. The abbreviation  ht($P$)
denotes the height (height of highest peak) of the Dyck path $P$.

\begin{center}
  \begin{tabular}{@{} ccc @{}}
    \hline
 (0011,0021)-avoiders, not all 0s &  & Dyck paths, not a sawtooth   \\[.5mm]
    \hline
    & & \\[-3mm]
   \# initial 0s & $\leftrightarrow$ & $
\begin{cases}
    1, & \text{if $P$ starts $U\!D$ and ht($P)\ge 3 $;} \\
    1 +\# \text{ initial }U\!D\text{s,} & \text{\hspace*{15mm}otherwise.}
\end{cases}
$ \\[5mm]
\# terminal 0s & $\leftrightarrow$ & \# terminal $U\!D$s \\[2mm]

length of segment starting \emph{after} & \raisebox{-1.5ex}{$\leftrightarrow$} &   \raisebox{-1.5ex}{\# initial $U\!D$s} \\[-3mm]
the second 0 and ending at 1 &  &     \\[1mm]
    \hline
  \end{tabular}  \\[5mm]
  Table 1
\end{center}

In the last statistic, the segment length is interpreted as 0
if either there is no second 0 or no 1 after the second 0; recall that there is at most a single 1 after a second 0.

Table 1 shows that the three statistics on the left have the same joint distribution on avoiders as the corresponding three on the right have on Dyck paths. But there is a larger family of statistics on avoiders that appear to have the same joint distribution as a corresponding family on Dyck paths, as shown in Table 2. Here, the all-0s and the $012\cdots (n-1)$ ascent sequences and the sawtooth, $(U\!D)^n$, and pyramid, $U^n D^n$, Dyck paths are excluded to simplify the definition of statistics, just as in Table 1. (The reader may give suitable interpretations of the statistics, where needed, in the excluded cases.)

\begin{center}
  \begin{tabular}{@{} ccc @{}}
    \hline
 (0011,0021)-avoiders &  & Dyck paths   \\[-1mm]
 not all 0s, not $012\cdots $ &  & not $(U\!D)^n$, not $U^n D^n$   \\[.5mm]
    \hline
    & & \\[-3mm]
\# 0s & $\leftrightarrow$ & length first ascent \\[1mm]

length of segment starting \emph{at} & \raisebox{-1.5ex}{$\leftrightarrow$} &   \raisebox{-1.5ex}{length first descent} \\[-3mm]
the second 0 and ending at 1 &  &     \\[1mm]

\# terminal 0s & $\leftrightarrow$ & degree of elevation \\[1mm]

minimum repeated entry & $\leftrightarrow$ & \# initial $U\!D$s \\[1mm]

\# terminal ``max possible'' entries & $\leftrightarrow$ & \# terminal $U\!D$s \\[1mm]

\# left to right maxima & $\leftrightarrow$ & \# $U\!D$s (peaks) \\[1mm]

\# right to left minima & $\leftrightarrow$ & \# returns to \gl \\[2mm]

    \hline
  \end{tabular}  \\[5mm]
  Table 2
\end{center}

The second statistic on the left is interpreted as 1
if either there is no second 0 or no 1 after the second 0.
In the fourth statistic, there always is a repeated entry (for $n\ge 2$) because $012 \cdots$ is the only ascent sequence without repeated entries.
In the fifth statistic, a ``max possible'' entry $u_i$ is one for which
the defining inequality ``$u_i \le 1 + \#$ ascents in $(u_1,\dots,u_{i-1})$''
becomes an equality.  We refer the reader to \cite{Ca} for any definitions not given here regarding Dyck paths

It would be interesting to find a bijection to verify the conjectured equality of
joint distributions of Table 2.

\section{A refinement of the case 021}

In this section, we consider a refinement of the set $\mathcal{A}_{n,m}=\mathcal{S}_{n,m}(021)$ and calculate the generating function of the distribution on $\mathcal{A}_{n,m}$ for the statistic recording the largest letter and hence obtain an extension of \cite[Theorem 2.15]{DS}.
Our methods are algebraic and hence provide an alternate proof of that result which was shown bijectively.

Given $n \geq 1$ and $0 \leq m <n$, let $\mathcal{A}_{n,m,r,s}$ denote the subset of $\mathcal{A}_{n,m}$ whose members have largest letter $r$ and last letter $s$, where $0 \leq s \leq r \leq m$.  For example, we have $\pi=0101103030 \in \mathcal{A}_{10,4,3,0}$.  The numbers $a_{n,m,r,s}=|\mathcal{A}_{n,m,r,s}|$ may be determined as described in the following lemma.

\begin{lemma}\label{l0}
The array $a_{n,m,r,s}$ may assume nonzero values only when $n \geq 1$ and $0 \leq s \leq r \leq m <n$.  It satisfies the condition $a_{n,0,0,0}=1$ if $n \geq1$, together with $a_{n,m,0,0}=0$ and $a_{n,m,1,1}=\binom{n-1}{2m-1}$ if $n,m \geq 1$.  If $n \geq 3$ and $1 \leq m \leq n-1$, then the numbers $a_{n,m,r,s}$ satisfy the recurrences
\begin{equation}\label{l0e1}
a_{n,m,r,0}=\sum_{j=0}^{r}a_{n-1,m,r,j}, \qquad r \geq 1,
\end{equation}
and
\begin{equation}\label{l0e2}
a_{n,m,r,r}=a_{n-1,m,r,r}+\sum_{i=1}^{r-1}\sum_{j=0}^i a_{n-1,m-1,i,j}+\sum_{j=0}^r a_{n-2,m-1,r,j}, \qquad r \geq 2,
\end{equation}
with $a_{n,m,r,s}=0$ if $r>s\geq 1$ and $a_{2,1,1,0}=0$.
\end{lemma}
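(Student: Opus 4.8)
The plan is to prove the stated facts about $a_{n,m,r,s}$ by exploiting the structural description of $021$-avoiders established in the proof of Proposition~\ref{p1}: an ascent sequence avoids $021$ precisely when no letter $i\ge 1$ occurs to the right of a strictly larger letter, equivalently when its nonzero entries, read left to right, are weakly increasing. First I would record the support constraints. Since the last nonzero entry of such a sequence is its largest nonzero entry, if $\pi\in\mathcal{A}_{n,m,r,s}$ has $r\ge 1$ then its final letter is either $0$ or $r$; this gives $a_{n,m,r,s}=0$ whenever $r>s\ge 1$. To see $r\le m$, note that at the first occurrence of the maximal letter $r$, say at position $p$, one has $r\le\asc(\pi_1\cdots\pi_{p-1})+1$, and since $\pi_{p-1}<r$ this position contributes an ascent, so $m\ge\asc(\pi_1\cdots\pi_p)\ge r$; together with $m\le n-1$ this yields the range $0\le s\le r\le m<n$. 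The base cases are then immediate: $0^n$ is the unique element with $r=m=0$, giving $a_{n,0,0,0}=1$ and $a_{n,m,0,0}=0$ for $m\ge 1$, while $\mathcal{A}_{2,1,1,0}=\varnothing$ since the only length-$2$ sequence with an ascent is $01$.

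For the formula $a_{n,m,1,1}=\binom{n-1}{2m-1}$, I would observe that the members of $\mathcal{A}_{n,m,1,1}$ are exactly the binary words of length $n$ beginning with $0$ and ending with $1$ (every such word is automatically an ascent sequence avoiding $021$), and that their number of ascents equals the number of maximal runs of $1$'s. Writing such a word as $0^{a_1}1^{b_1}\cdots 0^{a_m}1^{b_m}$ with all $a_i,b_i\ge 1$ exhibits a bijection with compositions of $n$ into $2m$ positive parts, of which there are $\binom{n-1}{2m-1}$.

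The recurrence \eqref{l0e1} comes from the trailing-zero bijection: appending a $0$ to any sequence of length $n-1$ with $m$ ascents and maximal letter $r\ge 1$ (and arbitrary last letter $j\le r$) creates no new ascent and leaves the maximum unchanged, and this clearly reverses. For \eqref{l0e2} (the case $s=r\ge 2$), I would partition $\mathcal{A}_{n,m,r,r}$ according to how the terminal $r$ sits. Since $\pi_n=r$ is the maximum, the weakly-increasing property forces $\pi_{n-1}\in\{0,r\}$ whenever $r$ already occurs before position $n$. This gives three classes: (A1) $\pi_{n-1}=r$; (A2) $r$ occurs earlier and $\pi_{n-1}=0$; and (B) $\pi_n$ is the only occurrence of $r$. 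In class (A1), deleting the final $r$ removes no ascent and preserves both the maximum and the terminal letter, yielding the term $a_{n-1,m,r,r}$. In class (B), the truncation $\pi_1\cdots\pi_{n-1}$ has maximal letter some $i$ with $1\le i\le r-1$, exactly $m-1$ ascents (the step into $r$ is lost), and terminal letter $j\le i$; summing gives $\sum_{i=1}^{r-1}\sum_{j=0}^{i}a_{n-1,m-1,i,j}$, the middle term. In class (A2), deleting the final two letters $0r$ leaves a sequence of length $n-2$ still having maximal letter $r$, one fewer ascent, and arbitrary last letter $j\le r$; the inverse appends $0r$, which adds exactly one ascent and keeps the maximum at $r$. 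This accounts for $\sum_{j=0}^{r}a_{n-2,m-1,r,j}$, the final term (equivalently $a_{n-1,m-1,r,0}$ by \eqref{l0e1}). As these three classes partition $\mathcal{A}_{n,m,r,r}$, adding the three contributions gives \eqref{l0e2}.

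The main obstacle is the bookkeeping in \eqref{l0e2}: one must verify that the three classes genuinely partition $\mathcal{A}_{n,m,r,r}$ (which rests on the weakly-increasing structure forcing $\pi_{n-1}\in\{0,r\}$ once $r$ has appeared) and, in each class, confirm both that the ascent count drops by the correct amount and that the image is again a $021$-avoiding ascent sequence of the advertised type. A minor subtlety worth flagging is that in the middle term the inner sum over $0\le j\le i$ is effectively supported only on $j\in\{0,i\}$, since the truncation again has last letter equal to $0$ or to its maximum (and similarly for the final term); this is harmless but should be noted so that the sums match the stated ranges.
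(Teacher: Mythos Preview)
Your proof is correct and follows essentially the same approach as the paper's: the same trailing-letter deletion for \eqref{l0e1}, and the same three-case partition of $\mathcal{A}_{n,m,r,r}$ (penultimate letter $r$; unique occurrence of $r$; repeated $r$ with penultimate letter forced to $0$) for \eqref{l0e2}. You supply somewhat more detail than the paper does---the explicit verification that $r\le m$, the composition argument for $a_{n,m,1,1}$, and the remark that the inner sums are effectively supported on $j\in\{0,i\}$---but the decomposition and bijections are identical.
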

\begin{proof}
The case when $r=s=0$ is clear from the definitions.  Observe also that when $r \geq 1$, deleting the final $0$ from $\pi \in \mathcal{A}_{n,m,r,0}$ provides a bijection with $\cup_{j=0}^{r-1}\mathcal{A}_{n-1,m,r,j}$, which implies \eqref{l0e1}.  Furthermore, note that $\mathcal{A}_{n,m,r,s}$ is empty if $r>s \geq 1$ since we are to avoid $021$.

So suppose $\pi \in \mathcal{A}_{n,m,r,r}$, where $1 \leq r \leq m <n$.  If $r=1$, then $\pi \in \mathcal{A}_{n,m,1,1}$ is a binary sequence that starts with $0$, ends in $1$, and contains exactly $m$ ascents, which implies $a_{n,m,1,1}=\binom{n-1}{2m-1}$.  So assume $r \geq 2$.  Then there are $a_{n-1,m,r,r}$ members of $\mathcal{A}_{n,m,r,r}$ whose penultimate letter is also $r$.  On the other hand, there are $a_{n-1,m-1,i,j}$ members of $\mathcal{A}_{n,m,r,r}$ in which the letter $r$ occurs once, the second largest letter is $i$, and the penultimate letter is $j$, where $0 \leq j \leq i \leq r-1$ and $i \geq 1$.  Summing over all possible $i$ and $j$ implies that there are $\sum_{i=1}^{r-1}\sum_{j=0}^i a_{n-1,m-1,i,j}$ members of $\mathcal{A}_{n,m,r,r}$ in which the letter $r$ occurs once.  Finally, if the letter $r$ occurs more than once within a member of $\mathcal{A}_{n,m,r,r}$ whose penultimate letter is less than $r$, then that letter must be zero, for otherwise ther!
 e would be an occurrence of $021$ (which may be obtained by taking the left-most occurrences of $0$ and $r$, together with the penultimate letter).  Deleting the last two letters from such members of $\mathcal{A}_{n,m,r,r}$ then provides a bijection with the set $\cup_{j=0}^r\mathcal{A}_{n-2,m-1,r,j}$, which has cardinality $\sum_{j=0}^r a_{n-2,m-1,r,j}$.  Combining the three cases above yields \eqref{l0e2}, which completes the proof.
\end{proof}

If $n>m\geq r \geq 0$, then let $A_{n,m,r}(u)=\sum_{s=0}^r a_{n,m,r,s}u^s$.  From the definitions, note that
$$A_{n,m,0}(u)=\begin{cases} 1, & \text{if} \text{~}\text{~} m = 0; \\ 0, & \text{if} \text{~}\text{~} m \geq 1, \end{cases}$$
and
$$A_{n,m,1}(u)=\binom{n-1}{2m}+\binom{n-1}{2m-1}u, \qquad m \geq 1.$$
If $n>m \geq 0$, then let
$A_{n,m}(u,v)=\sum_{r=0}^m A_{n,m,r}(u)v^r$ and $B_{n,m}(v)=\sum_{r=0}^m a_{n,m,r,r}v^r$.
Note that for all $n \geq 1$, we have $A_{n,0}(u,v)=B_{n,0}(v)=1$, $B_{n,1}(v)=(n-1)v$, and
$$A_{n,1}(u,v)=\binom{n-1}{2}v+(n-1)uv.$$

The polynomials $A_{n,m}(u,v)$ and $B_{n,m}(v)$ satisfy the following recurrences when $m \geq 2$.

\begin{lemma}\label{l1}
If $m \geq 2$, then
\begin{align}
A_{n,m}(u,v)&=B_{n-1,m}(uv)+A_{n-1,m}(1,v)+A_{n-2,m-1}(1,uv)\notag\\
&~~+\frac{uv}{1-uv}(A_{n-1,m-1}(1,uv)-(uv)^mA_{n-1,m-1}(1,1))\label{l1e1}
\end{align}
and
\begin{align}
B_{n,m}(v)&=B_{n-1,m}(v)+A_{n-2,m-1}(1,v)+\frac{v}{1-v}(A_{n-1,m-1}(1,v)-v^mA_{n-1,m-1}(1,1)).\label{l1e2}
\end{align}
\end{lemma}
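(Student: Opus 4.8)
The plan is to obtain both recurrences by translating the term-level recurrences of Lemma \ref{l0} into generating-function identities. The structural fact that drives everything is already recorded in Lemma \ref{l0}: one has $a_{n,m,r,s}=0$ whenever $1\le s<r$, so in every sequence counted the last letter is either $0$ or the maximum $r$. Consequently, writing $c_{n,m,r}:=\sum_{j=0}^{r}a_{n,m,r,j}=a_{n,m,r,0}+a_{n,m,r,r}$ for the number of avoiders of length $n$ with $m$ ascents and largest letter $r$, we have $A_{n,m}(1,v)=\sum_{r}c_{n,m,r}v^{r}$, and $A_{n,m}(u,v)$ splits as an $s=0$ part plus an $s=r$ part.

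First I would establish the bridging identity $A_{n,m}(u,v)=A_{n-1,m}(1,v)+B_{n,m}(uv)$, valid for $m\ge1$. Indeed, since $a_{n,m,0,0}=0$ for $m\ge1$, the $s=r$ part of $A_{n,m}(u,v)$ is exactly $\sum_{r\ge1}a_{n,m,r,r}(uv)^{r}=B_{n,m}(uv)$, while for the $s=0$ part the recurrence \eqref{l0e1} gives $a_{n,m,r,0}=c_{n-1,m,r}$, whence $\sum_{r\ge1}a_{n,m,r,0}v^{r}=\sum_{r\ge1}c_{n-1,m,r}v^{r}=A_{n-1,m}(1,v)$. Granting this identity, \eqref{l1e1} follows immediately from \eqref{l1e2}: simply replace $v$ by $uv$ throughout \eqref{l1e2} and add $A_{n-1,m}(1,v)$. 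Thus the entire content of the lemma reduces to proving \eqref{l1e2}.

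To prove \eqref{l1e2} I would start from $B_{n,m}(v)=\binom{n-1}{2m-1}v+\sum_{r=2}^{m}a_{n,m,r,r}v^{r}$, isolating the $r=1$ term (given by Lemma \ref{l0}) and applying the recurrence \eqref{l0e2} to each remaining coefficient. This produces three sums. The first, $\sum_{r\ge2}a_{n-1,m,r,r}v^{r}$, equals $B_{n-1,m}(v)-\binom{n-2}{2m-1}v$. The third, $\sum_{r\ge2}c_{n-2,m-1,r}v^{r}$, equals $A_{n-2,m-1}(1,v)-c_{n-2,m-1,1}v$, where the closed form $c_{n,m,1}=\binom{n-1}{2m}+\binom{n-1}{2m-1}=\binom{n}{2m}$ gives $c_{n-2,m-1,1}=\binom{n-2}{2m-2}$. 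The middle (double) sum $\sum_{r=2}^{m}v^{r}\sum_{i=1}^{r-1}c_{n-1,m-1,i}$ is the computational heart: interchanging the order of summation and summing the resulting geometric series $\sum_{r=i+1}^{m}v^{r}$ yields $\frac{v}{1-v}\bigl(A_{n-1,m-1}(1,v)-v^{m}A_{n-1,m-1}(1,1)\bigr)$, which is precisely the last term of \eqref{l1e2}.

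Assembling the three pieces, the target \eqref{l1e2} will follow once the leftover monomials cancel, i.e. once $\binom{n-1}{2m-1}-\binom{n-2}{2m-1}-\binom{n-2}{2m-2}=0$, which is exactly Pascal's rule. The main obstacle is bookkeeping at the boundaries: one must check that the $r=0$ and $r=1$ coefficients vanish or are absorbed correctly (using $m\ge2$, so that $m-1\ge1$ and hence $c_{n',m-1,0}=0$), that the top index $r=m$ contributes nothing to the third sum (since $c_{n-2,m-1,r}=0$ for $r>m-1$), and that the small-$n$ degenerate cases remain consistent with the initial data stipulated in Lemma \ref{l0}. Once these boundary checks are in place, the geometric-series evaluation and the single Pascal identity complete the argument, and \eqref{l1e1} then drops out of the bridging identity with no further work.
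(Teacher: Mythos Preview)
Your argument is correct, and the computation proving \eqref{l1e2} is essentially identical to the paper's: isolate the $r=1$ term, apply \eqref{l0e2} for $r\ge2$, interchange the double sum and evaluate the geometric series, then collapse the leftover binomials with Pascal's rule. The boundary checks you flag (vanishing of $c_{n',m-1,0}$ for $m\ge2$, vanishing of $c_{n-2,m-1,m}$) are exactly the ones the paper handles implicitly.

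Where you depart from the paper is in the treatment of \eqref{l1e1}. The paper proves \eqref{l1e1} and \eqref{l1e2} independently, running the same geometric-series computation twice (once in the variable $uv$, once in $v$). You instead observe the bridging identity $A_{n,m}(u,v)=A_{n-1,m}(1,v)+B_{n,m}(uv)$, which follows directly from \eqref{l0e1} together with the vanishing of $a_{n,m,r,s}$ for $1\le s<r$, and then obtain \eqref{l1e1} from \eqref{l1e2} by substituting $v\mapsto uv$ and adding $A_{n-1,m}(1,v)$. This is a genuine economy: it eliminates one full pass through the computation and makes transparent why the two identities have the same shape. The paper's parallel derivation, on the other hand, keeps the two formulas self-contained, which is marginally more convenient if one wants to cite \eqref{l1e1} without tracing through \eqref{l1e2}.
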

\begin{proof}
By \eqref{l0e1} and \eqref{l0e2}, we have
$$A_{n,m,r}(u)=A_{n-1,m,r}(1)+u^ra_{n-1,m,r,r}+u^rA_{n-2,m-1,r}(1)+u^r\sum_{i=1}^{r-1}A_{n-1,m-1,i}(1), \qquad r \geq 2.$$
Thus, for $m \geq 2$, we have
\begin{align*}
A_{n,m}(u,v)&=A_{n,m,0}(u)+A_{n,m,1}(u)v+\sum_{r=2}^m A_{n,m,r}(u)v^r\\
&=\binom{n-1}{2m}v+\binom{n-1}{2m-1}uv+(A_{n-1,m}(1,v)-A_{n-1,m,1}(1)v)\\
&~~+(B_{n-1,m}(uv)-a_{n-1,m,1,1}uv)+(A_{n-2,m-1}(1,uv)-A_{n-2,m-1,1}(1)uv)\\
&~~+\sum_{r=2}^m (uv)^r\sum_{i=1}^{r-1}A_{n-1,m-1,i}(1)\\
&=\binom{n-1}{2m}v+\binom{n-1}{2m-1}uv+\left(A_{n-1,m}(1,v)-\binom{n-1}{2m}v\right)\\
&~~+\left(B_{n-1,m}(uv)-\binom{n-2}{2m-1}uv\right)+\left(A_{n-2,m-1}(1,uv)-\binom{n-2}{2m-2}uv\right)\\
&~~+\sum_{i=1}^{m-1}A_{n-1,m-1,i}(1)\left(\frac{(uv)^{i+1}-(uv)^{m+1}}{1-uv}\right)\\
&=B_{n-1,m}(uv)+A_{n-1,m}(1,v)+A_{n-2,m-1}(1,uv)\\
&~~+\frac{uv}{1-uv}(A_{n-1,m-1}(1,uv)-(uv)^mA_{n-1,m-1}(1,1)),
\end{align*}
which gives \eqref{l1e1}.

By \eqref{l0e2}, we also have
\begin{align*}
B_{n,m}(v)&=\binom{n-1}{2m-1}v+\left(B_{n-1,m}(v)-\binom{n-2}{2m-1}v\right)
+\sum_{r=2}^m v^r\sum_{i=1}^{r-1} A_{n-1,m-1,i}(1)\\
&~~+\sum_{r=2}^m A_{n-2,m-1,r}(1)v^r\\
&=\binom{n-2}{2m-2}v+B_{n-1,m}(v)+\sum_{i=1}^{m-1}A_{n-1,m-1,i}(1)\left(\frac{v^{i+1}-v^{m+1}}{1-v}\right)\\
&~~+\left(A_{n-2,m-1}(1,v)-\binom{n-2}{2m-2}v\right)\\
&=B_{n-1,m}(v)+A_{n-2,m-1}(1,v)+\frac{v}{1-v}(A_{n-1,m-1}(1,v)-v^mA_{n-1,m-1}(1,1)),
\end{align*}
which completes the proof.
\end{proof}

If $n \geq 1$, then let
$A_n(u,v,w)=\sum_{m=0}^{n-1} A_{n,m}(u,v)w^m$ and $B_n(v,w)=\sum_{m=0}^{n-1} B_{n,m}(v)w^m$.  Define the generating functions $f(x;u,v,w)$ and $g(x;v,w)$ by
$$f(x;u,v,w)=\sum_{n \geq 1} A_n(u,v,w)x^n$$
and
$$g(x;v,w)=\sum_{n \geq 1} B_n(v,w)x^n.$$
Note that
$$f(x;1,1,w)=\sum_{n \geq 1} A_n(1,1,w)x^n=\sum_{0\leq m<n}S_{n,m}(021)x^nw^m.$$  The following functional equations are satisfied by $f$ and $g$.

\begin{lemma}\label{l2}
We have
\begin{align}
f(x;u,v,w)&=\frac{x-2x^2-wx^3}{1-x}+xg(x;uv,w)+xf(x;1,v,w)\notag\\
&~~+\left(wx^2+\frac{uvwx}{1-uv}\right)f(x;1,uv,w)-\frac{u^2v^2wx}{1-uv}f(x;1,1,uvw)\label{l2e1}
\end{align}
and
\begin{align}
(1-x)g(x;v,w)&=\frac{x-x^2-wx^3}{1-x}+\left(wx^2+\frac{vwx}{1-v}\right)f(x;1,v,w)-\frac{v^2wx}{1-v}f(x;1,1,vw).\label{l2e2}
\end{align}
\end{lemma}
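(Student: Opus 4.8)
The plan is to obtain both \eqref{l2e1} and \eqref{l2e2} directly from the recurrences \eqref{l1e1} and \eqref{l1e2} of Lemma~\ref{l1} by multiplying through by $w^m x^n$ and summing over $n$ and $m$. Since those recurrences hold only for $m\ge2$, I would first isolate the $m=0$ and $m=1$ contributions to $f$ and $g$ using the explicit formulas recorded before the lemma. For $f$, the $m=0$ part is $\sum_{n\ge1}x^n=\frac{x}{1-x}$ since $A_{n,0}(u,v)=1$, and the $m=1$ part is $w\sum_{n\ge2}\bigl(\binom{n-1}{2}v+(n-1)uv\bigr)x^n=w\bigl(\frac{vx^3}{(1-x)^3}+\frac{uvx^2}{(1-x)^2}\bigr)$, using $\sum_{n\ge2}\binom{n-1}{2}x^n=\frac{x^3}{(1-x)^3}$ and $\sum_{n\ge2}(n-1)x^n=\frac{x^2}{(1-x)^2}$. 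The corresponding pieces of $g$ are $\frac{x}{1-x}$ and $\frac{vwx^2}{(1-x)^2}$.

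Next I would sum the right-hand side of \eqref{l1e1} over $m\ge2$ and $n>m$ and read off each term as a shifted copy of one of the generating functions: the term $B_{n-1,m}(uv)$ yields $xg(x;uv,w)$; the term $A_{n-1,m}(1,v)$ yields $xf(x;1,v,w)$; the term $A_{n-2,m-1}(1,uv)$ yields $wx^2f(x;1,uv,w)$ after the shift $(n,m)\mapsto(n-2,m-1)$; the term $\frac{uv}{1-uv}A_{n-1,m-1}(1,uv)$ yields $\frac{uvwx}{1-uv}f(x;1,uv,w)$; and the term $-\frac{uv}{1-uv}(uv)^mA_{n-1,m-1}(1,1)$ yields $-\frac{u^2v^2wx}{1-uv}f(x;1,1,uvw)$, the factor $(uv)^m$ combining with $w^m$ to turn the third argument into $uvw$. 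The two $f(x;1,uv,w)$ pieces combine to give the bracket $wx^2+\frac{uvwx}{1-uv}$ of \eqref{l2e1}. Running the identical bookkeeping on \eqref{l1e2} produces $xg(x;v,w)$, $\bigl(wx^2+\frac{vwx}{1-v}\bigr)f(x;1,v,w)$, and $-\frac{v^2wx}{1-v}f(x;1,1,vw)$; moving $xg(x;v,w)$ to the left then yields the factor $1-x$ in \eqref{l2e2}.

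The step I expect to demand the most care is the boundary bookkeeping: each shifted generating function above automatically includes its own $m=0$ and $m=1$ terms, whereas the recurrence only contributes $m\ge2$, so from every shifted function I must subtract its $m\le1$ portion. Each such portion is an explicit rational function coming from $A_{n,0}=B_{n,0}=1$ and the $m=1$ formulas (for instance the $m=1$ part of $xg(x;uv,w)$ is $\frac{uvwx^3}{(1-x)^2}$, since $B_{n-1,1}(uv)=(n-2)uv$). I would then collect all these corrections together with the $m=0,1$ pieces peeled off at the start and check that everything involving $u$ or $v$ cancels: the $\frac{x^3}{(1-x)^3}$ contributions cancel outright, the $\frac{x^2}{(1-x)^2}$ and $\frac{x^3}{(1-x)^2}$ contributions combine to $+uvw\frac{x^2}{1-x}$, and the two $\frac{1}{1-uv}$ boundary pieces combine, via $-uv+u^2v^2=-uv(1-uv)$, to $-uvw\frac{x^2}{1-x}$, cancelling the former. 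What remains is the $u,v$-free rational function $\frac{x-2x^2-wx^3}{1-x}$ for \eqref{l2e1}, and the analogous computation leaves $\frac{x-x^2-wx^3}{1-x}$ for \eqref{l2e2} (here only one $-\frac{x^2}{1-x}$ survives, because the recurrence for $g$ has a single term carrying just an $x$-shift, namely $B_{n-1,m}(v)$, whereas that for $f$ has two, which accounts for the $-2x^2$ versus $-x^2$). This is a routine but attentive generating-function manipulation; the main risk is an off-by-one error in a summation range or a mislabelled boundary term, so I would repeatedly check each shift against the constraint that $A_{n,m}$ and $B_{n,m}$ vanish unless $0\le m<n$.
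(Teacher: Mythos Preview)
Your proposal is correct and follows essentially the same approach as the paper: both derive \eqref{l2e1} and \eqref{l2e2} by summing the recurrences of Lemma~\ref{l1} weighted by $w^m$ and $x^n$, handling the $m=0,1$ boundary terms explicitly via the closed formulas for $A_{n,0}$, $A_{n,1}$, $B_{n,0}$, $B_{n,1}$. The only organizational difference is that the paper first sums over $m\ge2$ to obtain an intermediate recurrence for $A_n(u,v,w)$ (and $B_n(v,w)$) and then sums over $n\ge2$, whereas you perform both summations in one pass; the bookkeeping of boundary corrections is the same either way.
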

\begin{proof}
Multiplying \eqref{l1e1} by $w^m$ and summing over $m \geq 2$ implies for all $n \geq 3$,
\begin{align}
A_n(u,v,w)&=1+\left(\binom{n-1}{2}v+(n-1)uv\right)w+\left(B_{n-1}(uv,w)-1-(n-2)uvw\right)\notag\\
&~~+\left(A_{n-1}(1,v,w)-1-\binom{n-1}{2}vw\right)+w\left(A_{n-2}(1,uv,w)-1\right)\notag\\
&~~+\frac{uvw}{1-uv}\left(A_{n-1}(1,uv,w)-1\right)-\frac{u^2v^2w}{1-uv}\left(A_{n-1}(1,1,uvw)-1\right)\notag\\
&=-1+B_{n-1}(uv,w)+A_{n-1}(1,v,w)+w\left(A_{n-2}(1,uv,w)-1\right)\notag\\
&~~+\frac{uvw}{1-uv}\left(A_{n-1}(1,uv,w)-uvA_{n-1}(1,1,uvw)\right). \label{l2e3}
\end{align}
Note that \eqref{l2e2} is also seen to hold when $n=2$, upon defining $A_0(u,v,w)=1$.  Similarly, recurrence \eqref{l1e2} implies for all $n \geq 2$,
\begin{align}
B_n(v,w)&=1+(n-1)vw+(B_{n-1}(v,w)-1-(n-2)vw)+w(A_{n-2}(1,v,w)-1)\notag\\
&~~+\frac{vw}{1-v}\left(A_{n-1}(1,v,w)-1\right)-\frac{v^2w}{1-v}\left(A_{n-1}(1,1,vw)-1\right)\notag\\
&=B_{n-1}(v,w)+w(A_{n-2}(1,v,w)-1)+\frac{vw}{1-v}(A_{n-1}(1,v,w)-vA_{n-1}(1,1,vw)). \label{l2e4}
\end{align}
Multiplying \eqref{l2e3} by $x^n$ and summing over $n\geq2$ yields
\begin{align*}
f(x;u,v,w)-x&=-\frac{x^2}{1-x}+xg(x;uv,w)+xf(x;1,v,w)+wx^2\left(f(x;1,uv,w)-\frac{x}{1-x}\right)\\
&~~+\frac{uvwx}{1-uv}\left(f(x;1,uv,w)-uvf(x;1,1,uvw)\right),
\end{align*}
which gives \eqref{l2e1}.
Similarly, \eqref{l2e4} implies
\begin{align*}
g(x;v,w)-x&=xg(x;v,w)+wx^2\left(f(x;1,v,w)-\frac{x}{1-x}\right)\\
&~~+\frac{vwx}{1-v}\left(f(x;1,v,w)-vf(x;1,1,vw)\right),
\end{align*}
which gives \eqref{l2e2}.
\end{proof}

Let $$h(x;t)=\sum_{0 \leq m <n} N_{n,m+1}x^nt^m=\frac{1-x-xt-\sqrt{(1-x-xt)^2-4x^2t}}{2xt}.$$

We now state the generating function of the joint distribution on $\mathcal{S}_n(021)$ for the statistics recording the largest letter and the number of ascents.

\begin{theorem}\label{t3}
We have
$$f(x;1,v,w)=\frac{x(1-v)[(1-x)^2-wx^2]-v^2wx(1-x)h(x;vw)}{(1-x)((1-v)[(1-x)^2-wx^2]-vwx)}.$$
\end{theorem}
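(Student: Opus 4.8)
The plan is to reduce the two functional equations of Lemma~\ref{l2} to a single relation in the two unknowns $f(x;1,v,w)$ and the ``diagonal'' series $f(x;1,1,vw)$, and then to extract both by the kernel method with $v$ as the catalytic variable. First I would set $u=1$ throughout \eqref{l2e1}, noting that $f(x;1,uv,w)$ and $f(x;1,1,uvw)$ specialize to $f(x;1,v,w)$ and $f(x;1,1,vw)$. Next I would solve \eqref{l2e2} for $g(x;v,w)$ and substitute the result into the specialized \eqref{l2e1}, eliminating $g$ entirely. After collecting the terms carrying $f(x;1,v,w)$ and clearing the denominators $1-x$ and $1-v$ (the factor $1-v$ conveniently cancels the $\tfrac{1}{1-v}$ in the coefficients), this should yield a relation of the shape
\[
f(x;1,v,w)\,\bigl\{(1-v)[(1-x)^2-wx^2]-vwx\bigr\}=\frac{(1-v)x[(1-x)^2-wx^2]}{1-x}-v^2wx\,f(x;1,1,vw),
\]
the elementary identity $(x-2x^2-wx^3)(1-x)+x(x-x^2-wx^3)=x(1-x)^2-wx^3$ being the only nonobvious simplification needed to reach it.

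The heart of the argument is the kernel method applied to this relation. Writing $A:=(1-x)^2-wx^2$, the coefficient (kernel) of $f(x;1,v,w)$ is $(1-v)A-vwx$, which is linear in $v$ and vanishes at $v_0=A/(A+wx)$, a well-defined formal power series in $x$. Substituting $v=v_0$ annihilates the left-hand side, and solving the remainder for the diagonal gives, after simplifying with $1-v_0=wx/(A+wx)$, the evaluation $f(x;1,1,v_0w)=x(A+wx)/((1-x)A)$. To turn this into a closed form for the diagonal as a genuine function of its own argument $t:=v_0w$, I would eliminate $w$: the relation $t=Aw/(A+wx)$ together with $A=(1-x)^2-wx^2$ rearranges to the quadratic $x^2w^2-w(1-x)(1-x-tx)+t(1-x)^2=0$, while the identity $f(x;1,1,t)=xw/(t(1-x))$ (obtained from $1/t=1/w+x/A$) then produces
\[
f(x;1,1,t)=\frac{(1-x-tx)-\sqrt{(1-x-tx)^2-4x^2t}}{2xt}=h(x;t),
\]
the square root entering precisely through this inversion and the minus branch being forced by the requirement that the result be a power series in $x$. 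In particular, setting $t=w$ recovers $f(x;1,1,w)=h(x;w)$, i.e.\ $S_{n,m}(021)=N_{n,m+1}$, giving the promised algebraic reproof of \cite[Theorem~2.15]{DS}.

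Finally, with $f(x;1,1,vw)=h(x;vw)$ now known, I would substitute it back into the reduced relation displayed above and solve for $f(x;1,v,w)$, multiplying numerator and denominator by $1-x$ and restoring $A=(1-x)^2-wx^2$ to obtain exactly the claimed formula. I expect the principal obstacle to be the kernel-method step, and specifically the re-parametrization that expresses the diagonal $f(x;1,1,\cdot)$ as an honest function of its argument: because the kernel is only linear in $v$, its single root pins the diagonal down merely at the point $v_0w$, and it is the subsequent quadratic inversion (solving for $w$ in terms of $t$, with the correct branch) that is both the source of the square root in $h$ and the step most prone to algebraic error. The remaining manipulations are routine rational-function bookkeeping.
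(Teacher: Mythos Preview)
Your proposal is correct and follows essentially the same route as the paper: specialize \eqref{l2e1} at $u=1$, eliminate $g$ via \eqref{l2e2}, obtain the kernel relation (your displayed equation is precisely the paper's \eqref{p3e1} after clearing the denominators $(1-x)(1-v)$), cancel the kernel, re-parametrize by $t=vw$ to identify the diagonal with $h(x;t)$, and substitute back. The only cosmetic difference is that the paper cancels the kernel by solving for $w=w_o(x,v)$ and then inverts $t=vw_o$ for $v$, whereas you solve for $v=v_0(x,w)$ and then invert $t=v_0w$ for $w$; these are the two sides of the same linear kernel equation and lead to the same $h$.
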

\begin{proof}
Letting $u=1$ in \eqref{l2e1}, and substituting the expression for $g(x;v,w)$ in \eqref{l2e2}, implies
\begin{align*}
&\left(1-x-wx^2-\frac{vwx}{1-v}\right)f(x;1,v,w)\\
&\qquad~~=\frac{x-2x^2-wx^3}{1-x}-\frac{v^2wx}{1-v}f(x;1,1,vw)\\
&\qquad~~~~+\frac{x}{1-x}\left(\frac{x-x^2-wx^3}{1-x}+\left(wx^2+\frac{vwx}{1-v}\right)f(x;1,v,w)-\frac{v^2wx}{1-v}f(x;1,1,vw)\right),
\end{align*}
which may be rewritten as
\begin{equation}\label{p3e1}
\left(1-x-\frac{wx^2}{1-x}-\frac{vwx}{(1-v)(1-x)}\right)f(x;1,v,w)=x-\frac{wx^3}{(1-x)^2}-\frac{v^2wx}{(1-v)(1-x)}f(x;1,1,vw).
\end{equation}

To solve \eqref{p3e1}, we use the \emph{kernel method} (see \cite{BBD}).  Setting the coefficient of $f(x;1,v,w)$ on the left-hand side of \eqref{p3e1} equal to zero, and solving for $w=w_o$ in terms of $x$ and $v$, gives
$$w_o=\frac{(1-x)^2(1-v)}{x(x(1-v)+v)}.$$
Letting $w=w_o$ in \eqref{p3e1} then implies
\begin{align}
f(x;1,1,vw_o)&=\frac{((1-x)^2-w_ox^2)(1-v)}{v^2w_o(1-x)}\notag\\
&=\frac{x((1-x)^2-w_ox^2)(x(1-v)+v)}{v^2(1-x)^3}\notag\\
&=\frac{x}{v(1-x)}.\label{p3e2}
\end{align}

If
$$t=vw_o=\frac{v(1-x)^2(1-v)}{x(x(1-v)+v)},$$
then solving for $v=v_o$ in terms of $x$ and $t$ gives
$$v_o=\frac{1-x-xt+\sqrt{(1-x-xt)^2-4x^2t}}{2(1-x)},$$
where we have chosen the principle root.  Therefore, by \eqref{p3e2}, we have
\begin{align*}
f(x;1,1,t)&=\frac{x}{v_o(1-x)}\\
&=\frac{2x}{1-x-xt+\sqrt{(1-x-xt)^2-4x^2t}}\\
&=\frac{1-x-xt-\sqrt{(1-x-xt)^2-4x^2t}}{2xt}.\\
\end{align*}
The desired formula now follows from \eqref{p3e1}.
\end{proof}

\textbf{Remark:}  Note that $v$ marks the largest letter and $w$ the number of ascents in the preceding formula.  Taking $v=1$ recovers the generating function for the cardinality of $\mathcal{S}_{n,m}(021)$, which is $N_{n,m+1}$.  Using \eqref{l2e1} and \eqref{l2e2}, one may obtain an expression for $f(x;u,v,w)$, where $u$ marks the last letter.

\end{document}